\title[Real orbits on complex symmetric spaces]
{Orbits of real semisimple Lie groups \\ on real loci of complex symmetric spaces}
\author[S.~Cupit-Foutou]{St\'ephanie Cupit-Foutou}
\address{Ruhr-Universit\"at Bochum, Fakult\"at f\"ur Mathematik, D-44780 Bochum, Germany}
\email{stephanie.cupit@rub.de}
\author[D.~A.~Timashev]{Dmitry A. Timashev}
\thanks{The second author was partially supported by the Russian Foundation for Basic Research grant 16-01-00818}
\address{Lomonosov Moscow State University, Faculty of Mechanics and
Mathematics, Department of Higher Algebra, 119991 Moscow, Russia}
\email{timashev@mccme.ru}
\keywords{Semisimple group, symmetric space, real point, orbit, Galois cohomology}
\subjclass[2010]{14M27, 14G05, 11E72, 20G10}
\date{September 4, 2017}
\newcommand{\0}[2]{{}_{#2}#1}
\newcommand{\1}[2]{{}^{#2}#1}
\newcommand{\ii}{\boldsymbol{i}}
\newcommand{\CC}{\mathbb{C}}
\newcommand{\RR}{\mathbb{R}}
\newcommand{\ZZ}{\mathbb{Z}}
\newcommand{\g}{\mathfrak{g}}
\newcommand{\h}{\mathfrak{h}}
\newcommand{\s}{\mathfrak{s}}
\newcommand{\sgl}{\mathfrak{sl}}
\newcommand{\so}{\mathfrak{so}}
\newcommand{\sym}{\mathfrak{sym}}
\newcommand{\ttt}{\mathfrak{t}}
\newcommand{\z}{\mathfrak{z}}
\renewcommand{\phi}{\varphi}
\newcommand{\Ad}{\operatorname{Ad}}
\newcommand{\ad}{\operatorname{ad}}
\newcommand{\diag}{\operatorname{diag}}
\newcommand{\Gal}{\operatorname{Gal}}
\newcommand{\Img}{\operatorname{Im}}
\newcommand{\Ker}{\operatorname{Ker}}
\newcommand{\crit}[1]{(#1)^{\text{crit}}}
\newcommand{\Ho}[1]{\mathrm{H}^{#1}}
\newcommand{\Z}[1]{\mathrm{Z}^{#1}}
\newtheorem{theorem}{Theorem}[section]
\newtheorem{lemma}[theorem]{Lemma}
\newtheorem{corollary}[theorem]{Corollary}
\theoremstyle{definition}
\newtheorem{example}[theorem]{Example}
\newtheorem*{remark}{Remark}
\numberwithin{equation}{section}
\begin{document}

\begin{abstract}
Let $G$ be a complex semisimple algebraic group and $X$ be a complex symmetric homogeneous $G$-variety. Assume that both $G$, $X$ as well as the $G$-action on $X$ are defined over real numbers. Then $G(\RR)$ acts on $X(\RR)$ with finitely many orbits. We describe these orbits in combinatorial terms using Galois cohomology, thus providing a patch to a result of A.~Borel and L.~Ji.
\end{abstract}

\maketitle

\section*{Introduction}

In many classification problems of algebra and geometry, it often happens that a classification of objects of a given type over a non-algebraically closed field $\Bbbk$ is finer than over the algebraic closure $\overline\Bbbk$: an equivalence class over $\overline\Bbbk$ may split into several (even possibly infinitely many) equivalence classes over $\Bbbk$. One classical example when such a situation occurs is given by the classification of quadratic forms over real and complex numbers.

In the language of algebraic transformation groups, this situation is usually formalized as follows. Let $G$ be an algebraic group and $X$ be a homogeneous $G$-variety, both defined over $\Bbbk$. Then $G(\overline\Bbbk)$ acts on $X(\overline\Bbbk)$ transitively, while $G(\Bbbk)$ may have several orbits on $X(\Bbbk)$. A natural problem is to classify these orbits. One standard tool for solving this kind of problem is Galois cohomology. In what follows we restrict ourselves to the case $\Bbbk=\RR$, $\overline\Bbbk=\CC$, as in the above example. In this case, the general Galois cohomology theory is greatly simplified.

In particular, suppose that $G$ is a connected semisimple algebraic group defined over $\RR$ and $X$ is a symmetric homogeneous $G$-variety. This means that the stabilizer of any point of $X$ is a symmetric subgroup of $G$, i.e., coincides, up to connected components, with the fixed point subgroup of an involution on $G$. The classification of orbits on the real locus of a symmetric homogeneous variety was studied by A.~Borel and L.~Ji in \cite[Chap.\,6]{comp-symm}. Unfortunately, some essential arguments are missing therein and  Theorem II.6.5 in loc.~cit.\ does not hold in general.

In this note, we give a corrected description of $G(\RR)$-orbits in $X(\RR)$. Our approach follows that of \cite[Chap.\,6]{comp-symm}; it is based on Galois cohomology. The most important difference is that our description of orbits involves an action of the normalizer of a certain subtorus by twisted conjugation instead of the usual action of the respective Weyl group in \cite[Thm.\,II.6.5]{comp-symm}. Thus this note may be considered as a patch for \cite[Chap.\,6]{comp-symm}.

The description of orbits on the real locus of a symmetric homogeneous variety which we obtain can be reformulated in more geometric terms without any use of Galois cohomology. From this geometric perspective, it is natural to address the problem of extending the classification of real orbits from symmetric varieties to a more general class of spherical homogeneous spaces. We address this question in our work in progress; this will be developed elsewhere.

The paper is organized as follows. In Section~\ref{notation} we introduce basic notation and conventions which we use throughout the paper. In Section~\ref{symm} we collect some basic facts about symmetric spaces defined over real numbers (which are sometimes called semisimple symmetric spaces). They include conjugacy of $\RR$-split tori, the description of the little Weyl group, and some product decompositions of groups related to symmetric spaces. Generalities on Galois cohomology are recalled in Section~\ref{Galois}. Our main result on the description of orbits in real loci of symmetric spaces is obtained in Section~\ref{results}. Theorem~\ref{Ker} gives such a description in terms of Galois cohomology; our geometric interpretation of this statement can be read in Theorem~\ref{slice-orb}.

\subsection*{Acknowledgement}

This work was initiated during D.~Timashev's visit to the Ruhr University of Bochum in July 2016. The second author thanks the Transformation Groups Research team of Bochum for its hospitality and providing excellent working conditions. Both authors are grateful to P.~Heinzner for his support. Many thanks are due to the referee for helpful comments and suggestions aimed at improving the presentation.

\section{Notation, terminology, and conventions}
\label{notation}

\subsection{\relax}

The ground field is $\CC$. Complex algebraic varieties and groups are identified with the sets of their complex points. For an algebraic variety $Y$ defined over $\RR$ its set of real points is denoted by $Y(\RR)$. The real structure on $Y$ is determined by the action of the Galois group $\Gal(\CC/\RR)$ on $Y(\CC)=Y$, i.e., by the complex conjugation map, which is denoted as $y\mapsto\bar{y}$ unless otherwise specified. Note that $y\in Y(\RR)$ if and only if $y=\bar{y}$.

Algebraic groups are denoted by capital Latin letters and their Lie algebras are denoted by respective lowercase German letters. The identity component of an algebraic group $L$ is denoted by $L^{\circ}$.

\subsection{\relax}

Given a semisimple connected linear algebraic group $G$ defined over $\RR$, we consider a \emph{symmetric} homogeneous $G$-variety $X$ defined over $\RR$. ``Symmetric'' means that the stabilizer $H=G_{x_0}$ of any point $x_0\in X$ is a symmetric subgroup of $G$, i.e., $H^{\circ}=(G^{\theta})^{\circ}$, where $G^{\theta}\subset G$ is the subgroup of fixed points of an involutive automorphism $\theta$ of $G$.

If $G$ is simply connected (to which case we can always reduce by taking a universal cover of $G$), then $G^{\theta}$ is connected \cite[8.1]{end-lag} and $H$ is any subgroup between $G^{\theta}$ and the normalizer in $G$ of $G^\theta$. At the other extreme, if $G$ is adjoint, then $G^{\theta}$ is self-normalizing \cite[2.2]{emb(symm)} and $H$ is any subgroup of $G^{\theta}$ containing the identity component $(G^{\theta})^{\circ}$.

\subsection{\relax}

We assume the real locus $X(\RR)$ is not empty and choose a base point $x_0\in X(\RR)$. Then $H=G_{x_0}$ is defined over $\RR$. This assumption fits into the setting of \cite[Chap.\,6]{comp-symm}, since Borel and Ji start with a so-called semisimple real symmetric space and take its complexification.

Let $\sigma$ denote the real structure (complex conjugation) on $G$, i.e., $\sigma(g)=\bar{g}$, $\forall g\in G$. Then $G^{\sigma}=G(\RR)$ is the Lie subgroup of real points in $G$. It is connected whenever $G$ is simply connected \cite[Chap.\,4, Thm.\,2.2]{LieG}. The real structure on $X\simeq G/H$ is given by $$x=gx_0\mapsto\bar{x}=\sigma(g)x_0.$$

\subsection{\relax}

Choose a Cartan involution $\tau$ on $G$ compatible with $H$ and with the real structure. This is always possible, see, e.g., \cite[Chap.\,4, 3.3, Cor.\,3]{LieG}. The subgroups $G^{\tau}$ and $H^{\tau}$ are compact real forms of $G$ and $H$, respectively. The involutions $\theta,\sigma,\tau$ pairwise commute.

Let $\omega=\sigma\tau$, $\theta^*=\theta\omega$, $\sigma^*=\sigma\theta$, $\tau^*=\tau\theta$ be the other involutions generated by $\theta,\sigma,\tau$. The involutions $\theta,\omega,\theta^*$ are holomorphic whereas $\sigma,\tau,\sigma^*,\tau^*$ are antiholomorphic. Put $H^*=G^{\theta^*}$, yet another symmetric subgroup of $G$ defined over $\RR$.

Let $\g^{\pm\sigma,\pm\theta,\pm\tau}$ denote the common eigenspaces for $\sigma,\theta,\tau$ acting on $\g$ with eigenvalues $\pm1$ for the respective choices of signs. They are pairwise orthogonal with respect to the Killing form of $\g$ considered as a real Lie algebra. In the picture below, these eight eigenspaces are represented by boxes as follows: an indication $\iota$ or $-\iota$ in a box means that an involution $\iota$ acts on the respective eigenspace with eigenvalue $1$ or $-1$, respectively.
\begin{equation}\label{8boxes}
\begin{array}{|c|c|c|c|c|}
\cline{4-5}
        \multicolumn{3}{c|}{}                       &      -\sigma,\tau        &       -\sigma,\tau       \\
        \multicolumn{3}{r|}{\ii\cdot\g(\RR)=}       & \theta,-\omega,-\theta^* & -\theta,-\omega,\theta^* \\
\cline{4-5}
        \multicolumn{3}{c|}{}                       &      -\sigma,-\tau       &      -\sigma,-\tau       \\
\cline{1-2}
      \sigma,-\tau       &       \sigma,-\tau      &&  \theta,\omega,\theta^*  & -\theta,\omega,-\theta^* \\
\cline{4-5}
\theta,-\omega,-\theta^* & -\theta,-\omega,\theta^* &             \multicolumn{3}{c}{}                    \\
\cline{1-2}
      \sigma,\tau        &       \sigma,\tau        &             \multicolumn{3}{l}{=\g(\RR)}            \\
 \theta,\omega,\theta^*  & -\theta,\omega,-\theta^* &             \multicolumn{3}{c}{}                    \\
\cline{1-2}
\end{array}
\end{equation}
We specify eigenvalues only for those involutions which are essential for the sequel. Similar notation is used for eigenspaces of other commuting sets of involutions.

\subsection{\relax}

Let $T\simeq(\CC^{\times})^r$ be a complex algebraic torus. There is a unique \emph{polar decomposition} $T=T^+\times T^-$, where $T^+\simeq(\RR^+)^r$, $T^-\simeq(S^1)^r$, $T^-$ is the maximal compact subgroup of $T$, and $\ttt^-=\ii\ttt^+$.

For any subgroup $S\subseteq T$, we denote by $\0S2\subseteq S$ the set of elements of order $\le2$ and by $S^2$ the set of squares in $S$; both sets are subgroups and $\0S2$ is the kernel of the epimorphism $S\to S^2$, $s\mapsto s^2$. For instance, $\0T2=\0{T^-}2\simeq\{\pm1\}^r$.

Given an involutive (anti)holomorphic automorphism $\phi$ of $T$, we say that $T$ is \emph{$\phi$-split} if $\phi$ acts on $T^-$ as the inversion. In the holomorphic case, this means that $\phi$ acts on the whole $T$ as the inversion. In the antiholomorphic case, $\phi$ fixes $T^+$ pointwise and defines the real structure on $T$ such that $T(\RR)=T^+\times\0{T^-}2\simeq(\RR^{\times})^r$. In this case, we also say that $T$ is \emph{$\RR$-split}, assuming that the real structure is fixed.

A reductive algebraic group over $\RR$ is \emph{$\RR$-split} if it contains an $\RR$-split maximal torus (defined over $\RR$).

\section{Generalities on symmetric spaces}
\label{symm}

In this section, we collect some general results on the structure of symmetric spaces defined over real numbers. Our basic reference is \cite{ss-symm}.

\subsection{Conjugacy of $\RR$-split tori}

Choose a maximal Abelian real Lie subalgebra $\ttt_1^+\subseteq\g^{\sigma,-\theta,-\tau}$ and extend it to a maximal Abelian real Lie subalgebra $\ttt^+=\ttt_0^+\oplus\ttt_1^+\subseteq\g^{\sigma,-\tau}$. (Here $\ttt_p^+$ is the $(-1)^p$-eigenspace of $\theta$ in $\ttt^+$.) Let $\ttt=\ttt_0\oplus\ttt_1$ be the complexification of $\ttt^+$. The positions of these subalgebras with respect to the decomposition \eqref{8boxes} are indicated in the following picture:
$$
\begin{array}{|c|c|c|c|c|}
\cline{4-5}
        \multicolumn{3}{c|}{}                       &      \ttt_0^-       &      \ttt_1^-       \\
\cline{1-2}\cline{4-5}
      \ttt_0^+       &       \ttt_1^+      &&    &  \\
\cline{1-2}\cline{4-5}
         &             &             \multicolumn{3}{l}{}            \\
\cline{1-2}
\end{array}
$$

\begin{theorem}\label{conj}
The Lie algebra $\ttt$ (resp.\ $\ttt_1$) is an $\RR$-split maximal toral subalgebra in $\g^{-\omega}$ (resp.\ in $\g^{-\theta,-\omega}$). All maximal toral subalgebras in $\g^{-\omega}$ (resp.\ in $\g^{-\theta,-\omega}$) are conjugate to $\ttt$ (resp.\ $\ttt_1$) under $G^{\omega}$ (resp.\ $H^{\omega}$). All maximal $\RR$-split toral subalgebras in $\g$ (resp.\ in $\g^{-\theta}$) are conjugate to $\ttt$ (resp.\ $\ttt_1$) under $G^{\sigma}$ (resp.\ $H^{\sigma}$). Finally, all maximal $\RR$-split toral subalgebras in $\g^{-\omega}$ (resp.\ in $\g^{-\theta,-\omega}$) are conjugate to $\ttt$ (resp.\ $\ttt_1$) under $G^{\sigma,\tau}$ (resp.\ $H^{\sigma,\tau}$).
\end{theorem}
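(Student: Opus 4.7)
The plan is to reduce all eight assertions to two classical conjugacy theorems: Kostant--Rallis for Cartan subspaces of complex symmetric pairs, and the conjugacy of maximal hyperbolic abelian subspaces (Cartan in the Riemannian case, Berger in the reductive case) for real symmetric pairs. The bridge between the complex and real settings is the identification of $\g^{-\omega}$ (resp.\ $\g^{-\theta,-\omega}$) with the complexification of $\g^{\sigma,-\tau}$ (resp.\ $\g^{\sigma,-\theta,-\tau}$), which follows from $\omega=\sigma\tau$ and the joint eigenspace decomposition~\eqref{8boxes}.

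First I would verify that $\ttt$ is $\RR$-split, toral, and contained in $\g^{-\omega}$ (the argument for $\ttt_1$ is identical). Since $\ttt=\ttt^+\oplus\ii\ttt^+$ with $\ttt^+\subseteq\g^\sigma$, the involution $\sigma$ fixes $\ttt^+$ pointwise and inverts the compact part $\exp(\ii\ttt^+)$, so $\ttt$ is $\RR$-split in the sense of Section~\ref{notation}. The inclusion $\ttt^+\subseteq\g^{\sigma,-\tau}$ forces $\omega=-1$ on $\ttt^+$, hence $\ttt\subseteq\g^{-\omega}$; and elements of $\ttt^+$ are hyperbolic (hence semisimple) in $\g(\RR)$ as they lie in the noncompact summand of the Cartan decomposition. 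Maximality is a dimension count: $\dim_\CC\ttt=\dim_\RR\ttt^+$ equals the real rank of $\g(\RR)$, which equals the rank of the complex symmetric pair $(\g,\g^\omega)$, since this pair has the Riemannian pair $(\g^\sigma,\g^{\sigma,\tau})$ as a real form and the rank is invariant under passage to real forms.

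For conjugacy under $G^\omega$, I would apply Kostant--Rallis directly to $(\g,\g^\omega)$. For conjugacy under $H^\omega$, the key observation is that $\theta^*=\theta\omega$ induces the decomposition $\g^{\theta^*}=\g^{\theta,\omega}\oplus\g^{-\theta,-\omega}$: writing $X=X_++X_-$ with $X_\pm\in\g^{\pm\theta}$, the equation $\theta^*(X)=X$, i.e.\ $\omega(X)=\theta(X)$, separates into $\omega(X_+)=X_+$ and $\omega(X_-)=-X_-$. Thus $(\g^{\theta^*},\g^{\theta,\omega})$ is a complex symmetric pair whose odd part is precisely $\g^{-\theta,-\omega}$, and Kostant--Rallis applied to it yields conjugacy under $(G^{\theta,\omega})^\circ\subseteq H^\omega$ (the latter inclusion holds for any $H$ between $G^\theta$ and its normalizer, since $(G^{\theta,\omega})^\circ$ is connected and lies in both $(G^\theta)^\circ\subseteq H$ and $G^\omega$).

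For the $\RR$-split statements in $\g^{-\omega}$ and $\g^{-\theta,-\omega}$, the real form of a max $\RR$-split toral subalgebra is a max abelian subspace of $\g^{\sigma,-\tau}$ (resp.\ $\g^{\sigma,-\theta,-\tau}$); the conjugacy of such subspaces under $G^{\sigma,\tau}$ is Cartan's classical theorem, and under $H^{\sigma,\tau}$ is Berger's theorem for the real reductive symmetric pair $(\g^\sigma,\h^\sigma)$ with compatible Cartan involution $\tau$. The two remaining cases, in $\g$ and $\g^{-\theta}$ under $G^\sigma$ and $H^\sigma$, reduce to these: a max $\RR$-split toral subalgebra has real form consisting of commuting hyperbolic elements, which by the standard conjugacy of such elements can be moved into $\g^{\sigma,-\tau}$ (resp.\ $\g^{\sigma,-\theta,-\tau}$) by an element of $G^\sigma$ (resp.\ $H^\sigma$, again invoking Berger). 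I expect the main obstacle to be the clean identification of the secondary pair $(\g^{\theta^*},\h^\omega)$, since it is not obvious from the data; once it is in place, all eight conjugacy assertions flow uniformly from the two classical theorems mentioned at the start.
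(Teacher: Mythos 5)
Your proof follows essentially the same route as the paper: reduce every assertion to the classical conjugacy theorems for Cartan subspaces of complex symmetric pairs (Vust/Kostant--Rallis, applied to $(\g,\g^\omega)$ and to the secondary pair $(\g^{\theta^*},\g^{\theta,\omega})$, the Lie algebra pair of $H^*/H^\omega$) and to maximal abelian subspaces of real semisimple symmetric pairs (Cartan for $G^\sigma/G^{\sigma,\tau}$, Rossmann/Berger for $G^\sigma/H^\sigma$), via the identification of $\g^{-\omega}$ and $\g^{-\theta,-\omega}$ with the complexifications of $\g^{\sigma,-\tau}$ and $\g^{\sigma,-\theta,-\tau}$. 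The only point where you diverge from the paper is in establishing maximality of $\ttt$ (resp.\ $\ttt_1$): you appeal to a rank count and the invariance of rank under passage to compatible real forms -- a fact that is essentially equivalent to what is being shown -- whereas the paper observes directly that $\ttt$ is self-centralizing in $\g^{-\omega}$, which is more economical and self-contained.
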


\begin{proof}
The first claim about $\ttt$ and $\ttt_1$ is clear, since $\ttt^+$ (hence $\ttt$) consists of semisimple elements, and both $\ttt$ and $\ttt_1$ are self-centralizing in $\g^{-\omega}$ and $\g^{-\theta,-\omega}$, respectively. The conjugacy of maximal toral subalgebras in $\g^{-\omega}$ and $\g^{-\theta,-\omega}$ is a well-known fact about complex symmetric spaces $G/G^{\omega}$ and $H^*/H^{\omega}$ \cite[1.3]{symm}. The claims about conjugacy of maximal $\RR$-split toral subalgebras with $\ttt$ or $\ttt_1$ are reduced to conjugacy of their ``non-compact parts'' with $\ttt^+$ or~$\ttt_1^+$. The latter for maximal $\RR$-split toral subalgebras in $\g$ and $\g^{-\omega}$ is a classical fact about the Riemannian symmetric space $G^{\sigma}/G^{\sigma,\tau}$ \cite[Chap.\,4, Thm.\,4.1]{LieG}. And for maximal $\RR$-split toral subalgebras in $\g^{-\theta}$ and $\g^{-\theta,-\omega}$, it is \cite[Thm.\,1, Cor.\,3]{ss-symm}.
\end{proof}

\begin{corollary}\label{split}
If $G$ is $\RR$-split, then $\ttt$ is a Cartan subalgebra of $\g$ and $\ttt_1$ is a maximal toral subalgebra in $\g^{-\theta}$.
\end{corollary}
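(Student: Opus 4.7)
The plan is to handle the two assertions in turn, each leveraging Theorem~\ref{conj}.

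For the first assertion, since $G$ is $\RR$-split it admits an $\RR$-split Cartan subalgebra $\mathfrak{a}\subseteq\g$ of dimension $\operatorname{rank}\g$, which is \emph{a fortiori} a maximal $\RR$-split toral subalgebra of $\g$. By Theorem~\ref{conj}, $\mathfrak{a}$ is $G^\sigma$-conjugate to $\ttt$, so $\dim_\CC\ttt=\operatorname{rank}\g$. Since $\ttt^+\subseteq\g^{\sigma,-\tau}$ consists of semisimple elements (elements of $\g^{\sigma,-\tau}$ being ``Hermitian'' with respect to the compact form $\g^\tau$ and so acting with real eigenvalues), the whole of $\ttt$ is toral, and a toral subalgebra of maximal possible dimension is a Cartan subalgebra.

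For the second assertion, Part~1 makes the root space decomposition $\g=\ttt\oplus\bigoplus_{\alpha\in R}\g_\alpha$ available. To show that $\ttt_1=\ttt\cap\g^{-\theta}$ is maximal toral in $\g^{-\theta}$ it suffices to verify $\z_\g(\ttt_1)\cap\g^{-\theta}=\ttt_1$. Now $\z_\g(\ttt_1)=\ttt\oplus\bigoplus_{\theta\alpha=\alpha}\g_\alpha$, and $\theta$ acts on each such one-dimensional $\g_\alpha$ as $\pm 1$, so the matter reduces to ruling out roots $\alpha$ with $\alpha|_{\ttt_1}=0$ (equivalently $\theta\alpha=\alpha$) and $\theta|_{\g_\alpha}=-1$. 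This exclusion is where the content lies.

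To effect it I would exploit that roots take real values on $\ttt^+$ (by the eigenvalue property above), which forces $\sigma\alpha=\alpha$ and $\tau\alpha=-\alpha$ for every root, so $\sigma$ preserves each $\g_\alpha$ while $\tau$ swaps $\g_\alpha\leftrightarrow\g_{-\alpha}$. Given a hypothetical bad $\alpha$, I would pick $E_\alpha\in\g_\alpha^\sigma$, set $E_{-\alpha}:=-\tau E_\alpha$, and use pairwise commutativity of $\sigma,\theta,\tau$ to verify that $Y:=E_\alpha+E_{-\alpha}$ lies in $\g^{\sigma,-\theta,-\tau}$, commutes with $\ttt_1^+$ (since $\alpha$ vanishes on $\ttt_1$), and does not belong to $\ttt_1^+$. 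Then $\ttt_1^+\oplus\RR Y$ would be an abelian subalgebra of $\g^{\sigma,-\theta,-\tau}$ strictly larger than $\ttt_1^+$, contradicting the maximality built into the choice of $\ttt_1^+$.
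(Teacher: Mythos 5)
Your proof is correct, and for the second (and only nontrivial) assertion it takes a genuinely different route from the paper's.

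For the first assertion you make explicit what the paper treats as immediate: Theorem~\ref{conj} gives conjugacy of $\ttt$ with a maximal $\RR$-split toral subalgebra, which under the $\RR$-split hypothesis is a Cartan subalgebra; since $\ttt$ is toral of full rank, it too is Cartan. The paper simply declares that ``the only thing requiring an explanation'' is the second claim, so your part~1 agrees with the intended (unwritten) argument.

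For the second assertion the two proofs diverge. You work with the root space decomposition $\g=\ttt\oplus\bigoplus_\alpha\g_\alpha$ made available by part~1, reduce the claim to excluding roots $\alpha$ with $\theta\alpha=\alpha$ and $\theta|_{\g_\alpha}=-1$, and then construct from such an $\alpha$ an explicit element $Y=E_\alpha-\tau E_\alpha\in\g^{\sigma,-\theta,-\tau}$ commuting with $\ttt_1^+$; this contradicts the maximality built into the choice of $\ttt_1^+$. All the verifications (that $\sigma\alpha=\alpha$ and $\tau\alpha=-\alpha$ because roots are real on $\ttt^+$, that $Y$ has the stated eigenvalues, that $Y\notin\ttt$) check out. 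The paper instead argues structurally: it replaces $G$ by $Z_G(\ttt_1)$ so that $\g^{-\theta,-\omega}=\ttt_1$ is central, decomposes the (now semisimple) derived subalgebra into indecomposable $(\theta,\omega)$-stable factors, shows each factor must lie entirely in $\h=\g^\theta$ or in $\g^\omega$, and rules out the latter because $G^\omega$ is $\RR$-anisotropic. Your version is more elementary and self-contained once $\ttt$ is known to be Cartan; the paper's localization argument has the advantage that its intermediate conclusions hold without the $\RR$-split hypothesis and yield the decompositions of $Z_G(T_1)$ and $Z_{G^\tau}(T_1)$ recorded in the Remark and equation~\eqref{cent}, which are used again in the proof of Lemma~\ref{Z0-action}.
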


\begin{proof}
The only thing requiring an explanation is that $\ttt_1$ is a maximal toral subalgebra in $\g^{-\theta}$. Replacing $G$ with $Z_G(\ttt_1)$, we may assume that $\ttt_1$ is central in $\g$. Hence the derived subalgebra $\g'$ has zero intersection with $\g^{-\theta,-\omega}$. It follows that each indecomposable $(\theta,\omega)$-stable factor $\g_i\subseteq\g'$ sits either in $\g^{\theta}=\h$ or in $\g^{\omega}$. (Otherwise the kernel of the adjoint action of $\g_i^{\omega}$ on $\g_i^{-\omega}$ would be a non-trivial $(\theta,\omega)$-stable ideal in $\g_i$ containing $\g_i^{-\theta}$.) In the latter case $G_i$ is anisotropic (i.e., contains no $\RR$-split tori), whence $G$ is not $\RR$-split, a contradiction. Hence $\g\subseteq\h\oplus\ttt_1$, and the corollary follows.
\end{proof}

\begin{remark}
The arguments in the above proof show that, even if $G$ is not $\RR$-split,
\begin{align}
Z_G(T_1)          &= Z_H(T_1)^{\circ}          \cdot Z_{G^{\omega}}(T_1)^{\circ}     \cdot T_1,   \\
Z_{G^{\tau}}(T_1) &= Z_{H^{\tau}}(T_1)^{\circ} \cdot Z_{G^{\sigma,\tau}}(T_1)^{\circ} \cdot T_1^-.
\label{cent}
\end{align}
Indeed, by the above, this holds on the level of Lie algebras, and both $Z_G(T_1)$ and $Z_{G^{\tau}}(T_1)$ are connected.
\end{remark}

\subsection{Weyl groups}

The set of nonzero eigenweights of $\ttt_1$ in $\g$ (or of $\ttt_1^+$ in~$\g^{\sigma}$) is a (possibly non-reduced) root system. We call it the \emph{real root system} of $X$ and its Weyl group $W_0$ the \emph{real little Weyl group} of~$X$. If $G$ is $\RR$-split, then, by Corollary~\ref{split}, we get the usual restricted root system and the little Weyl group of the (complex) symmetric space $X$ \cite[\S4]{inv}.

\begin{lemma}\label{Weyl}
$W_0$ is identified with $N_S(\ttt_1)/Z_S(\ttt_1)=N_S(\ttt_1^+)/Z_S(\ttt_1^+)$, where $S$ is any of the groups $G,H,G^{\sigma},G^{\tau},H^{\tau},G^{\sigma,\tau}$.
\end{lemma}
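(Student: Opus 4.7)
My plan is a sandwich argument bracketing $W_0$ between two chains of inclusions. Two elementary observations simplify matters: first, $Z_S(\ttt_1)=Z_S(\ttt_1^+)$, since any automorphism fixing the real subspace $\ttt_1^+$ pointwise fixes its complexification $\ttt_1$ pointwise, and vice versa; second, $N_S(\ttt_1^+)\subseteq N_S(\ttt_1)$, since a $\CC$-linear map preserving $\ttt_1^+$ setwise automatically preserves $\ttt_1=\ttt_1^+\otimes_{\RR}\CC$. Together these produce a natural injection of quotients $N_S(\ttt_1^+)/Z_S(\ttt_1^+)\hookrightarrow N_S(\ttt_1)/Z_S(\ttt_1)$. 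Setting $H^{\sigma,\tau}:=H\cap G^{\sigma,\tau}$, one checks directly that $H^{\sigma,\tau}$ sits inside each of the six groups $G,H,G^\sigma,G^\tau,H^\tau,G^{\sigma,\tau}$. It will therefore suffice to establish
\[
W_0\;\subseteq\;\frac{N_{H^{\sigma,\tau}}(\ttt_1^+)}{Z_{H^{\sigma,\tau}}(\ttt_1^+)}\;\subseteq\;\frac{N_S(\ttt_1^+)}{Z_S(\ttt_1^+)}\;\subseteq\;\frac{N_S(\ttt_1)}{Z_S(\ttt_1)}\;\subseteq\;\frac{N_G(\ttt_1)}{Z_G(\ttt_1)}\;\subseteq\;W_0,
\]
which forces the whole chain to collapse to equalities.

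For the leftmost inclusion I would realize each reflection $s_\alpha\in W_0$, for $\alpha$ a real root, by an element $k_\alpha\in H^{\sigma,\tau}$. Starting from a nonzero $e\in\g^\sigma_\alpha$ and setting $f:=-\tau(e)\in\g^\sigma_{-\alpha}$ (using that $\tau$ acts as $-1$ on $\ttt_1^+$, hence sends $\g^\sigma_\alpha$ to $\g^\sigma_{-\alpha}$), I would scale $e$ to produce an $\sgl_2(\RR)$-triple $(h_\alpha,e,f)$ with $h_\alpha\in\ttt_1^+$; the combination $e-f=e+\tau(e)$ is automatically $\sigma$- and $\tau$-invariant, so $\exp\bigl(\tfrac{\pi}{2}(e-f)\bigr)$ lies in $G^{\sigma,\tau}$ and realizes $s_\alpha$ on $\ttt_1^+$. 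To push $k_\alpha$ additionally into $H=G^\theta$, I would exploit that $\theta$ interchanges $\g^\sigma_\alpha$ and $\g^\sigma_{-\alpha}$, and choose $e$ within the $\tau^*$-fixed part $(\g^\sigma_\alpha)^{\tau^*}$ of the root space, so that $\theta(e+\tau e)=e+\tau e$. This is the classical reflection realization for real semisimple symmetric pairs, for which I would invoke \cite{ss-symm}.

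For the rightmost inclusion, any element of $N_G(\ttt_1)/Z_G(\ttt_1)$ acts faithfully on $\ttt_1$ permuting the real root system $\Sigma$, giving an injection into $\operatorname{Aut}(\Sigma)$. To refine this to $W_0$, I would embed $\ttt_1$ in a Cartan subspace $\mathfrak{a}\subseteq\g^{-\theta}$ as in Theorem~\ref{conj}, and, by conjugacy of Cartan subspaces inside the centralizer $Z_G(\ttt_1)$, adjust any representative by an element of $Z_G(\ttt_1)$ so as to additionally normalize $\mathfrak{a}$. The resulting class maps into the complex little Weyl group $W_X=N_G(\mathfrak{a})/Z_G(\mathfrak{a})$ of the complex symmetric pair, whose restriction to $\ttt_1$ lies in $W_0$ by the standard compatibility of restricted root systems. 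The principal difficulty will be the reflection construction of the second paragraph: it requires balancing the three commuting involutions $\theta,\sigma,\tau$ so that $k_\alpha$ lies in the triple intersection $H\cap G^\sigma\cap G^\tau$, which forces $e$ into a specific simultaneous eigenspace of the induced involutions on $\g^\sigma_\alpha$.
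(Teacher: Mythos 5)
Your sandwich strategy has a fatal flaw at the very first link: the inclusion
$W_0\subseteq N_{H^{\sigma,\tau}}(\ttt_1^+)/Z_{H^{\sigma,\tau}}(\ttt_1^+)$ is false in general. The
group on the right is precisely the \emph{very little Weyl group} $W_{00}$ of the paper,
which is a subgroup of $W_0$ and can be strictly smaller. In Example~\ref{qforms}
($G=SL_n$, $H=SO_{p,q}$) one has $W_0=S_n$ but
$N_{H^{\sigma,\tau}}(\ttt_1)/Z_{H^{\sigma,\tau}}(\ttt_1)=W_{00}=S_p\times S_q$.
In fact, the confusion between $W_0$ and $W_{00}$ is exactly the error in
Borel--Ji that this paper sets out to correct; your proposal reproduces it.

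The concrete gap is in your reflection construction. You want to place
$e\in(\g_\alpha^\sigma)^{\tau^*}$; but since $\tau^*$ and $\theta^*$ agree on
$\g^\sigma$, and the root space $\g_\alpha^\sigma$ is $\theta^*$-stable, this fixed
subspace can vanish: it may happen that $\g_\alpha^\sigma=(\g_\alpha^\sigma)^{-\theta^*}$.
For such roots no $\theta$-invariant element $e+\tau(e)$ exists, and the reflection
$r_\alpha$ cannot be realized inside $H\cap G^\sigma\cap G^\tau$. The paper handles
precisely this case: when $\g_\alpha^{\theta^*}=0$ but $\g_\alpha^{-\theta^*}\ne0$, it
exhibits \emph{two different} representatives of $r_\alpha$ --- one in $G^{\sigma,\tau}$
(not in $H$) and another in $H^\tau$ (not in $G^\sigma$). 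Neither lies in
$H^{\sigma,\tau}$, but together they cover all six groups
$G,H,G^\sigma,G^\tau,H^\tau,G^{\sigma,\tau}$. Any correct argument must therefore
argue group-by-group, with a case split on the eigenvalue of $\theta^*$ on
$\g_\alpha^\sigma$; a single common subgroup $H^{\sigma,\tau}$ does not suffice.

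Your rightmost inclusion $N_G(\ttt_1)/Z_G(\ttt_1)\subseteq W_0$ is in the right spirit,
although the paper's route is simpler: embed $\ttt_1$ into a Cartan subalgebra of
$\g$ (rather than a Cartan subspace of $\g^{-\theta}$), and use that the big Weyl chambers
cut $\ttt_1^+$ in the chambers of the real root system, so the orbits of
$N_G(\ttt_1)/Z_G(\ttt_1)$ on $\ttt_1^+$ meet each chamber once, forcing this group into $W_0$.
This sidesteps the compatibility of restricted-restricted root systems that your variant
would require.
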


\begin{proof}
For $S=G^{\sigma}$ or $G^{\sigma,\tau}$ this is \cite[Thm.\,5]{ss-symm}. In general, we have to prove two things.

First, we shall show that each root reflection $r_{\alpha}\in W_0$ comes from an element in $H^{\tau}$ or $G^{\sigma,\tau}$ normalizing $\ttt_1$. As in \cite{ss-symm}, we use $SL_2$-theory.

The root subspaces $\g_{\pm\alpha}$ are interchanged by $\theta,\omega$ and preserved by~$\theta^*$, hence decompose into the direct sum of $(\pm1)$-eigenspaces of $\theta^*$. The decompositions of any $\xi\in\g_{\alpha}^{\theta^*}$ and $\eta\in\g_{\alpha}^{-\theta^*}$ into the sums of eigenvectors for $\theta,\omega$ are given in terms of \eqref{8boxes} by the following picture:
$$
\begin{array}{|c|c|}
\hline
\eta_{01} &  \xi_{11} \\
\hline
 \xi_{00} & \eta_{10} \\
\hline
\end{array}
$$
In particular, $\theta$ and $\omega$ send $\xi$ to $\xi_{00}-\xi_{11}\in\g_{-\alpha}^{\theta^*}$ and $\eta$ to ${\pm\eta_{01}\mp\eta_{10}\in\g_{-\alpha}^{-\theta^*}}$, respectively. For any $\zeta\in\ttt_1$ we have
\begin{align*}
[\zeta,\xi_{00}] &= \alpha(\zeta)\xi_{11}, & [\zeta,\eta_{01}] &= \alpha(\zeta)\eta_{10}, \\
[\zeta,\xi_{11}] &= \alpha(\zeta)\xi_{00}, & [\zeta,\eta_{10}] &= \alpha(\zeta)\eta_{01}.
\end{align*}

Choose $\xi\ne0$ or $\eta\ne0$, depending on which of the subspaces $\g_{\alpha}^{\pm\theta^*}$ is nonzero. Since these subspaces are $\sigma$-stable, we may assume $\xi,\eta\in\g^{\sigma}$. Let $\zeta\in\ttt_1^+$ be the coroot vector of $\alpha$. Then we have an $\RR$-split $\sgl_2$-sub\-al\-ge\-bra
$$\s_{\alpha}=\langle\xi_{00},\xi_{11},\zeta\rangle\quad\text{or}\quad\langle\eta_{01},\eta_{10},\zeta\rangle$$
stable under all involutions. In terms of \eqref{8boxes}, its structure is represented in the following picture:
$$
\begin{array}{|c|c|}
\hline
\text{\raisebox{-1ex}{\huge{$0$}}} &
\begin{matrix}
 z & y \\
 y &-z \\
\end{matrix} \\
\hline
\begin{matrix}
 0 &-x \\
 x & 0 \\
\end{matrix} &
\text{\raisebox{-1ex}{\huge{$0$}}} \\
\hline
\end{array}
\qquad\text{or}\qquad
\begin{array}{|c|c|}
\hline
\begin{matrix}
 0 & y \\
 y & 0 \\
\end{matrix} &
\begin{matrix}
 z & 0 \\
 0 &-z \\
\end{matrix} \\
\hline
\text{\raisebox{-1ex}{\huge{$0$}}} &
\begin{matrix}
 0 &-x \\
 x & 0 \\
\end{matrix} \\
\hline
\end{array}
$$
Here each of the four boxes represents the part of $\sgl_2$ belonging to the respective common eigenspace of $\theta$ and $\omega$. The real structure $\sigma$ on $\sgl_2$ is the standard one given by complex conjugation of matrix entries.

Now $r_{\alpha}$ is represented both by
\begin{align*}
\begin{pmatrix}
 0 &-1 \\
 1 & 0 \\
\end{pmatrix} &\in H^{\sigma,\tau}\text{ (in the first case) or }G^{\sigma,\tau}\text{ (in the second case)}\\
\intertext{and by}
\begin{pmatrix}
 0  & \ii \\
\ii &  0  \\
\end{pmatrix} &\in H^{\tau}\text{ (in the second case).}
\end{align*}

Secondly, $N_G(\ttt_1)/Z_G(\ttt_1)$ is embedded in $W_0$. Indeed, $N_G(\ttt_1)$ acts on $\ttt_1$ as a subgroup of the ``big'' Weyl group of a Cartan subalgebra in $\g$ containing $\ttt_1$. (We can replace any element of $N_G(\ttt_1)$ with another one normalizing the Cartan subalgebra in the same coset modulo $Z_G(\ttt_1)$.) Since the Weyl chambers of the ``big'' root system of $G$ intersect $\ttt_1^+$ in the Weyl chambers of the real root system of $X$, the orbits of $N_G(\ttt_1)/Z_G(\ttt_1)$ intersect them in single points, as the $W_0$-orbits do. Hence $N_G(\ttt_1)/Z_G(\ttt_1)$ cannot be bigger than $W_0$.
\end{proof}

The nonzero eigenweights of $\ttt_1$ in $\g^{\theta^*}$ (or of $\ttt_1^+$ in $\g^{\sigma,\tau^*}$) form a root subsystem in the real root system of $X$. It is just the root system of the complex symmetric space $H^*/H^{\omega}$ or of the Riemannian symmetric space $G^{\sigma,\tau^*}/H^{\sigma,\tau}$ in the classical sense. We call its Weyl group $W_{00}$ the \emph{very little Weyl group} of $X$. It is a classical fact about symmetric spaces that
$$W_{00}\simeq N_S(\ttt_1)/Z_S(\ttt_1)=N_S(\ttt_1^+)/Z_S(\ttt_1^+),$$
where $S$ is any of the groups $H^*,H^{\omega},G^{\sigma,\tau^*},H^{\sigma,\tau}$.

\begin{example}\label{qforms}
Take $G=SL_n$ with the standard split real structure and $H=SO_{p,q}$ ($p+q=n$). The above involutions are defined by their action on $\g$:
\begin{gather*}
\sigma(\xi)=\bar\xi,\qquad\tau(\xi)=-{\bar\xi}^{\,\top},\\
\theta(\xi)=-I_{p,q}\xi^{\top}I_{p,q},\qquad\omega(\xi)=-\xi^{\top},\qquad\theta^*(\xi)=I_{p,q}\xi I_{p,q},
\end{gather*}
where
$$I_{p,q}=\diag(\underbrace{1,\dots,1}_p,\underbrace{-1,\dots,-1}_q).$$
The following picture describes the structure of the common eigenspaces for the involutions $\theta,\omega,\theta^*$ in the spirit of \eqref{8boxes}:
$$
\begin{array}{|c|c|}
\hline
\begin{array}{cc}
  0      & Y\strut \\
Y^{\top} & 0
\end{array} &
\begin{array}{cc}
\sym_p &   0    \\
  0    & \sym_q
\end{array} \\
\hline
\begin{array}{cc}
\so_p &   0   \\
  0   & \so_q
\end{array} &
\begin{array}{cc}
   0      & X \\
-X^{\top} & 0
\end{array} \\
\hline
\end{array}\ ,
$$
where $X,Y$ run over all matrices of size $p\times q$, and $\sym_m$ denotes the space of symmetric matrices of size $m\times m$. Here we have
$$H^*=S(L_p\times L_q),\qquad H^{\omega}=S(O_p\times O_q).$$
$T_1=T$ is the torus of diagonal matrices with determinant $1$, on which $W_0=S_n$ and $W_{00}=S_p\times S_q$ act by permutations of diagonal entries.
\end{example}

\subsection{Polar decompositions}

\begin{theorem}\label{polar}
There are decompositions:
\begin{align*}
G^{\sigma} &= G^{\sigma,\tau}\cdot(H^*)^{\sigma}\cdot H^{\sigma} = H^{\sigma,\tau}\cdot\exp\g^{\sigma,-\theta,\tau}\cdot\exp\g^{\sigma,-\theta,-\tau}\cdot\exp\h^{\sigma,-\tau} = \\
           &= G^{\sigma,\tau}\cdot T_1^+\cdot H^{\sigma}, \\
G^{\tau} &= G^{\sigma,\tau}\cdot(H^*)^{\tau}\cdot H^{\tau} = H^{\sigma,\tau}\cdot\exp\g^{\sigma,-\theta,\tau}\cdot\exp\ii\g^{\sigma,-\theta,-\tau}\cdot\exp\ii\h^{\sigma,-\tau} = \\     &= G^{\sigma,\tau}\cdot T_1^-\cdot H^{\tau}.
\end{align*}
\end{theorem}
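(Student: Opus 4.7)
The plan is to first establish the single core Cartan-type decomposition
\[
G^{\sigma}=G^{\sigma,\tau}\cdot\exp\g^{\sigma,-\theta,-\tau}\cdot H^{\sigma},
\]
and then derive the three stated forms of $G^{\sigma}$ by rearranging factors; the three statements for $G^{\tau}$ come from a parallel analysis in the compact symmetric pair $(G^{\tau},H^{\tau})$, where the role of $\g^{\sigma,-\theta,-\tau}$ is taken by $\g^{-\sigma,-\theta,\tau}=\ii\g^{\sigma,-\theta,-\tau}$. For the core decomposition I would observe that $(G^{\sigma},H^{\sigma})$ is a real reductive symmetric pair, that $\tau$ restricts to commuting Cartan involutions of both $G^{\sigma}$ and $H^{\sigma}$, and invoke the Mostow-type polar decomposition $G^{\sigma}=G^{\sigma,\tau}\cdot\exp(\mathfrak{q})\cdot H^{\sigma}$ (as developed in \cite{ss-symm}), where $\mathfrak{q}$ is the Killing-orthogonal complement of $\h^{\sigma,-\tau}$ in $\g^{\sigma,-\tau}$. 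Since $\g^{\sigma,-\tau}=\h^{\sigma,-\tau}\oplus\g^{\sigma,-\theta,-\tau}$ is Killing-orthogonal (as two distinct $\theta$-eigenspaces), one has $\mathfrak{q}=\g^{\sigma,-\theta,-\tau}$.

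From the core decomposition the three forms follow as follows. For form~(1), the eight-box diagram gives $(\h^*)^{\sigma}=\g^{\sigma,\theta^*}=\h^{\sigma,\tau}\oplus\g^{\sigma,-\theta,-\tau}$, which is the Cartan decomposition of the real reductive group $(H^*)^{\sigma}$; hence $(H^*)^{\sigma}=H^{\sigma,\tau}\cdot\exp\g^{\sigma,-\theta,-\tau}$, and the inclusion $H^{\sigma,\tau}\subseteq G^{\sigma,\tau}$ lets us absorb the extra factor into $G^{\sigma,\tau}$, yielding form~(1). For the middle form, expand $G^{\sigma,\tau}=H^{\sigma,\tau}\cdot\exp\g^{\sigma,-\theta,\tau}$ (polar decomposition of the compact symmetric pair $(G^{\sigma,\tau},H^{\sigma,\tau})$) and $H^{\sigma}=H^{\sigma,\tau}\cdot\exp\h^{\sigma,-\tau}$ (Cartan decomposition of $H^{\sigma}$), substitute into the core formula, and eliminate the interior copies of $H^{\sigma,\tau}$ using that $H^{\sigma,\tau}$ normalizes each of the subspaces $\g^{\sigma,\pm\theta,\pm\tau}$ and $\h^{\sigma,\pm\tau}$, so $H^{\sigma,\tau}\cdot\exp V=\exp V\cdot H^{\sigma,\tau}$ for each such $V$. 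For form~(3), Theorem~\ref{conj} implies that every $\RR$-split toral subalgebra of $\g^{-\theta,-\omega}$ is $H^{\sigma,\tau}$-conjugate to $\ttt_1$; thus each element of $\g^{\sigma,-\theta,-\tau}$ lies in some $H^{\sigma,\tau}$-conjugate of $\ttt_1^+$, giving $\exp\g^{\sigma,-\theta,-\tau}\subseteq H^{\sigma,\tau}\cdot T_1^+\cdot H^{\sigma,\tau}$, and the outer copies of $H^{\sigma,\tau}$ collapse into $G^{\sigma,\tau}$ on the left and $H^{\sigma}$ on the right.

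The $G^{\tau}$ decompositions are obtained by applying the same template in the compact setting $(G^{\tau},H^{\tau})$. The core decomposition $G^{\tau}=G^{\sigma,\tau}\cdot\exp\g^{-\sigma,-\theta,\tau}\cdot H^{\tau}$ follows from a minimum-distance argument for the $G^{\sigma,\tau}$-action on the compact Riemannian symmetric space $G^{\tau}/H^{\tau}$ (the normal space to the $G^{\sigma,\tau}$-orbit through the basepoint is precisely $\g^{-\sigma,-\theta,\tau}$), and each subsequent step mirrors the preceding paragraph: $(H^*)^{\tau}=\h^{\sigma,\tau}\oplus\g^{-\sigma,-\theta,\tau}$ is read off the diagram and gives form~(1); Cartan expansions of $G^{\sigma,\tau}$ and $H^{\tau}=H^{\sigma,\tau}\cdot\exp\h^{-\sigma,\tau}=H^{\sigma,\tau}\cdot\exp(\ii\h^{\sigma,-\tau})$ together with the same normalization trick give the middle form; and Theorem~\ref{conj} applied after multiplying by $\ii$ (which identifies $\g^{-\sigma,-\theta,\tau}$ with the $\RR$-split $\g^{\sigma,-\theta,-\tau}$ and $\ttt_1^-$ with $\ii\ttt_1^+$) yields form~(3). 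The main obstacle is the core Mostow-type polar decomposition in the non-compact case; this is the sole genuinely analytic input, and rather than reprove it I would simply cite \cite{ss-symm}.
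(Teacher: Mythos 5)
Your proposal is correct in outline, but it proves the crucial compact half by a genuinely different route than the paper. The paper treats the two lines asymmetrically: for $G^{\sigma}$ it does exactly what you do, namely cite the Mostow-type decomposition together with Cartan decompositions (its remark points to \cite{sph-fun}, \cite{ss-symm}, \cite{hyper}); for $G^{\tau}$ --- the half actually used later --- it gives a self-contained elementary proof that the multiplication map $\mu\colon G^{\sigma,\tau}\times T_1^-\times H^{\tau}\to G^{\tau}$ is surjective: it computes $d\mu$ explicitly, shows the critical values form the set $G^{\sigma,\tau}\cdot\crit{T_1^-}\cdot H^{\tau}$ of codimension $\ge 2$, and concludes by an open--closed--connectedness argument, then unpacks the three displayed forms via the Cartan decompositions of the compact pairs $(G^{\sigma,\tau},H^{\sigma,\tau})$, $((H^*)^{\tau},H^{\sigma,\tau})$, $(H^{\tau},H^{\sigma,\tau})$. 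You instead establish the core identity $G^{\tau}=G^{\sigma,\tau}\cdot\exp\g^{-\sigma,-\theta,\tau}\cdot H^{\tau}$ by a minimum-distance argument on $G^{\tau}/H^{\tau}$ and then conjugate $\g^{-\sigma,-\theta,\tau}=\ii\,\g^{\sigma,-\theta,-\tau}$ into $\ttt_1^-$ by $H^{\sigma,\tau}$ using Theorem~\ref{conj}; this is essentially the classical route of Hoogenboom and Matsuki, which the paper cites as the known alternative, and your factor-rearrangement and absorption steps (using that $\Ad(H^{\sigma,\tau})$ preserves each eigenspace) do yield all three forms on each line. What each approach buys: the paper's argument is independent of Riemannian geometry and of the literature on $K_1\backslash G/K_2$ decompositions, while yours is shorter and handles both lines by one template, at the cost of quoting \cite{ss-symm} for the noncompact core (as the paper itself does). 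One step of yours must be spelled out, since it is not a general fact about isometric actions that every orbit meets the exponential image of the normal space at a \emph{fixed} base point: you should take a point $y=\exp(\xi)\cdot o$ of the $G^{\sigma,\tau}$-orbit at minimal distance from the base point $o$, apply the first variation formula, and use the symmetric-space identity that the inner product of $\gamma'(1)$ with the Killing vector field of $\eta\in\g^{\sigma,\tau}$ at $y$ equals the Killing-form pairing of $\xi$ with $\eta$ (via $\Ad$-invariance and $\Ad(\exp\xi)\xi=\xi$), which transports orthogonality from $y$ back to $o$ and forces $\xi\in\g^{-\sigma,-\theta,\tau}$. With that computation included (and the usual tolerance about connected components when invoking decompositions of the form ``compact group $=$ fixed subgroup times $\exp$ of the $(-1)$-eigenspace'', a looseness the paper shares), your proof is sound.
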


\begin{remark}
The first collection of equalities (which we do not use in the sequel and provide for the sake of completeness) is a theorem of Mostow together with the Cartan decompositions of the non-compact real reductive groups $(H^*)^{\sigma},H^{\sigma}$ and of the compact symmetric pair $(G^{\sigma,\tau},H^{\sigma,\tau})$, see \cite[Thm.\,4.1(i)]{sph-fun}, \cite[Thms.\,9--10]{ss-symm}, \cite[7.1.2]{hyper}.

The second collection of equalities goes back to Hoogenboom \cite[Chap.\,6]{int-fun}. An elegant proof can be found in \cite[\S3]{2inv}. We are indebted to the referee for these references. This result is also claimed in \cite[Lemma II.6.2]{comp-symm}, but the proof is incomplete (namely the second equality in (II.6.5) is unjustified.) For the reader's convenience, we provide a self-contained elementary proof below.
\end{remark}

\begin{proof}[Proof of Theorem~\ref{polar}]
We only derive the decompositions of $G^{\tau}$. They follow by combining the Cartan decompositions of the compact symmetric pairs $(G^{\sigma,\tau},H^{\sigma,\tau})$, $((H^*)^{\tau},H^{\sigma,\tau})$, $(H^{\tau},H^{\sigma,\tau})$ with an observation that the multiplication map
$$\mu:G^{\sigma,\tau}\times T_1^-\times H^{\tau}\to G^{\tau}$$
is surjective.

To prove the surjectivity of $\mu$, we study the set of critical points of $\mu$ and its image, i.e., the set of critical values $\crit{G^{\tau}}\subset G^{\tau}$. We show that $\crit{G^{\tau}}$ is a union of finitely many locally closed submanifolds of codimension $\ge2$ in $G^{\tau}$. Once this is proved, we can conclude the proof as follows.

Outside of the critical set, $\mu$ is submersive, whence open. Also $\mu$ is closed being defined on a compact manifold. Since $G^{\tau}$ is connected, the set $G^{\tau}\setminus\crit{G^{\tau}}$ is open and connected. It intersects $\Img\mu$ in an open and (relatively) closed subset, whence $G^{\tau}\setminus\crit{G^{\tau}}\subset\Img\mu$. Now the density of $G^{\tau}\setminus\crit{G^{\tau}}$ and the closedness of $\Img\mu$ in $G^{\tau}$ will imply $\Img\mu=G^{\tau}$.

To pursue this strategy, we start with the critical points of $\mu$. By the $(G^{\sigma,\tau}\times H^{\tau})$-equiv\-ari\-ance, it suffices to consider points of the form $(1,t,1)$, $t=\exp\ii\gamma$, $\gamma\in\ttt_1^+$. The differential of $\mu$ at this point is given by
$$
d\mu:(\xi,\zeta\cdot t,\eta)\mapsto\xi\cdot t+\zeta\cdot t+t\cdot\eta
\qquad(\xi\in\g^{\sigma,\tau},\ \zeta\in\ttt_1^-,\ \eta\in\h^{\tau}).
$$
Hence $\Ker d\mu$ is defined by the equation $\xi+\zeta+\Ad(t)\eta=0$. Note that $$\Ad(t)\eta=\exp\ii\ad(\gamma)\,\eta=\cos\ad(\gamma)\,\eta+\ii\sin\ad(\gamma)\,\eta,$$
where the first summand is in $\h^{\tau}$, and the second summand is in $\g^{-\theta,\tau}$. The decompositions of $\xi,\eta,\zeta,\gamma$ into the sums of eigenvectors for $\theta,\omega$ are given in terms of \eqref{8boxes} by the following picture:
$$
\begin{array}{|c|c|}
\hline
          \eta_{01} &  \zeta,\gamma \\
\hline
 \xi_{00},\eta_{00} &      \xi_{10} \\
\hline
\end{array}
$$
Thus the defining equation of $\Ker d\mu$ reads as
\begin{align*}
\xi_{00}&=-\cos\ad(\gamma)\,\eta_{00},    &    \cos\ad(\gamma)\,\eta_{01}&=0,      \\
\xi_{10}&=-\ii\sin\ad(\gamma)\,\eta_{01}, & \ii\sin\ad(\gamma)\,\eta_{00}&=-\zeta.
\end{align*}
Since $\ad(\gamma)$ is diagonalizable, from $\zeta\in\Ker\ad(\gamma)\cap\Img\ad(\gamma)$ we get $\zeta=0$, $\xi=-\Ad(t)\eta$.

It follows that $(G^{\sigma,\tau}\times H^{\tau})$-orbits in $G^{\tau}$ intersect $T_1^-$ transversally and $\Ker d\mu$ is given by
\begin{align*}
\eta_{00}&\in\bigoplus_{\alpha(\gamma)=\pi{k},\ k\in\ZZ} (\g_{\alpha}^{\sigma}\oplus\g_{-\alpha}^{\sigma})^{\theta,\tau}\oplus\z_{\h^{\sigma,\tau}}(\ttt_1), \\
\eta_{01}&\in\bigoplus_{\substack{\alpha(\gamma)=\pi/2+\pi{k},\\ k\in\ZZ}} (\g_{\alpha}^{-\sigma}\oplus\g_{-\alpha}^{-\sigma})^{\theta,\tau}.
\end{align*}
(Recall that the root subspaces $\g_{\pm\alpha}$ are interchanged by $\theta$ and $\tau$, and preserved by $\sigma$ and $\theta^*=\theta\sigma\tau$.) Thus the critical set of $\mu$ is
$$G^{\sigma,\tau}\times\crit{T_1^-}\times H^{\tau},$$
where $\crit{T_1^-}\subset T_1^-$ is a union of finitely many hypersurfaces defined by equations $\alpha(t)=\pm1$ for $\alpha$ such that $\g_{\alpha}^{\theta^*}\ne0$ and $\alpha(t)=\pm\ii$ for $\alpha$ such that $\g_{\alpha}^{-\theta^*}\ne0$.

This hypersurface arrangement yields a natural stratification of $\Img\mu$ by locally closed submanifolds, which agrees with the orbit type stratification. Outside of the critical set, the $(G^{\sigma,\tau}\times H^{\tau})$-orbits have maximal dimension. The image $\crit{G^{\tau}}=G^{\sigma,\tau}\cdot\crit{T_1^-}\cdot H^{\tau}$ of the critical set is the union of strata of codimension $\ge2$, because the strata of $\crit{T_1^-}$ as well as the orbits intersecting them have smaller dimension. This observation completes the proof.
\end{proof}

\section{Review of Galois cohomology}
\label{Galois}

Our basic reference on Galois cohomology is Serre's book \cite{Galois}. We only consider Galois cohomology over real numbers, in which case the exposition is greatly simplified.

In this section, $G$ may be any linear algebraic group defined over $\RR$ (not necessarily connected or semisimple) and $X$ may be any homogeneous $G$-space (not necessarily symmetric) defined over $\RR$ and containing real points; $H$ still denotes the stabilizer of the base point $x_0\in X(\RR)$.

A Galois 1-cocycle with coefficients in $G$ is an element $z\in G$ which is inverse to its complex conjugate: $z\bar{z}=1$. The set $\Z1(\RR,G)$ of all Galois 1-cocycles is endowed with a $G$-action by twisted conjugation:
$$g:z\mapsto gz\bar{g}^{-1}.$$
The orbits $[z]$ of this action are the Galois cohomology classes and the orbit set
$$\Ho1(\RR,G)=\Z1(\RR,G)/G$$
is the (1-st) Galois cohomology set with coefficients in $G$. It is a pointed set, the base point being $[1]$, the class of the trivial 1-cocycle.

There is an exact sequence of pointed sets \cite[Chap.\,I, 5.4]{Galois}:
$$1\to H(\RR)\to G(\RR)\to X(\RR)\to\Ho1(\RR,H)\to\Ho1(\RR,G).$$
The maps are obvious, except for the next-to-last one, given by
\begin{equation}\label{connect}
x=gx_0\mapsto[\bar{g}^{-1}g].
\end{equation}
Exactness means, as usual, that the image of each map equals the kernel (i.e., the preimage of the base point) of the subsequent map. Note that the triviality of the kernel does not readily imply injectivity. The base points in $H(\RR)$, $G(\RR)$, and $X(\RR)$ are $1$, $1$, and $x_0$, respectively. These sets are usually referred to as the 0-th Galois cohomology sets with coefficients in $H$, $G$, and $X\simeq G/H$, respectively. Then the above exact sequence can be viewed as the long exact cohomology sequence corresponding to the short exact sequence of coefficients. It can be extended to the right by $\Ho1(\RR,G/H)$ if $H$ is a normal subgroup of $G$, and even further in some cases, see \cite[Chap.\,I, 5.5--5.7]{Galois}.

\begin{theorem}[{\cite[I.5.4, Cor.\,1]{Galois}}]\label{orbits}
The $G(\RR)$-orbits on $X(\RR)$ are the fibers of the map \eqref{connect}, i.e.,
$$X(\RR)/G(\RR)\simeq\Ker[\Ho1(\RR,H)\to\Ho1(\RR,G)].$$
\end{theorem}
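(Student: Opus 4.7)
The plan is to prove directly that the map $\phi\colon X(\RR)\to\Ho1(\RR,H)$, $x = gx_0\mapsto[\bar g^{-1}g]$ of \eqref{connect}, descends to a bijection between $X(\RR)/G(\RR)$ and $\Ker[\Ho1(\RR,H)\to\Ho1(\RR,G)]$. The whole argument reduces to four short formal checks, which I would carry out in sequence.

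First, I would verify that $\phi$ is well-defined, $G(\RR)$-invariant, and lands in the kernel. For $x = gx_0 \in X(\RR)$, the relation $\bar g x_0 = gx_0$ gives $g^{-1}\bar g \in H$, so $z := \bar g^{-1}g \in H$ and obviously $z\bar z = 1$. Replacing $g$ by $gh$ with $h\in H$ changes $z$ into $\bar h^{-1}zh = (\bar h^{-1})\,z\,\overline{\bar h^{-1}}^{\,-1}$, the twisted conjugate of $z$ by $\bar h^{-1}\in H$; so $[z]_H$ depends only on $x$. The same identity $z = \bar g^{-1}\cdot 1\cdot\overline{\bar g^{-1}}^{\,-1}$ exhibits $z$ as a $G$-coboundary of the trivial cocycle, placing $\phi(x)$ in the kernel. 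Replacing $g$ by $g_0g$ with $g_0\in G(\RR)$ leaves $z$ unchanged, so $\phi$ factors through $X(\RR)/G(\RR)$.

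Second, I would prove injectivity on the quotient. If $x_i = g_ix_0\in X(\RR)$ satisfy $\bar{g_2}^{-1}g_2 = h\,\bar{g_1}^{-1}g_1\,\bar h^{-1}$ for some $h\in H$, the equivalent form $g_2\bar h\,g_1^{-1} = \bar{g_2}\,h\,\bar{g_1}^{-1}$ shows that $g_0 := g_2\bar h\,g_1^{-1}$ coincides with its complex conjugate, hence lies in $G(\RR)$; since $H$ is $\sigma$-stable, $\bar h\in H$ fixes $x_0$, so $g_0 x_1 = g_2\bar h\,x_0 = x_2$. Third, for surjectivity, given $z\in H$ with $z\bar z = 1$ and $z = g\bar g^{-1}$ a coboundary in $G$, the condition $z\in H = G_{x_0}$ rewrites as $\bar g^{-1}x_0 = g^{-1}x_0$, so $x := \bar g^{-1}x_0$ is real; taking $g' = \bar g^{-1}$ as representative, $\phi(x) = [g\bar g^{-1}] = [z]$.

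The main obstacle is purely organizational: choosing the right representatives—$g_0 = g_2\bar h\,g_1^{-1}$ for injectivity and $x = \bar g^{-1}x_0$ for surjectivity—so that the reality conditions fall out of the cocycle relations automatically rather than having to be imposed by hand. No input beyond the definition of Galois cocycles and the $\sigma$-stability of $H$ is needed; the statement is a formal consequence of the exactness of the pointed sequence preceding~\eqref{connect}.
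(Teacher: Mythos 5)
Your proof is correct and proceeds by essentially the same cocycle computation as the paper: the heart of the matter is showing that $[\bar{g_1}^{-1}g_1]=[\bar{g_2}^{-1}g_2]$ in $\Ho1(\RR,H)$ forces $g_2 h g_1^{-1}$ (or in your notation $g_2\bar h\,g_1^{-1}$) to be real and to carry $x_1$ to $x_2$, which is exactly the paper's one-line verification. You are somewhat more thorough, since you also spell out well-definedness, the landing in the kernel, and surjectivity onto the kernel, whereas the paper only checks the fiber condition directly and implicitly relies on the exactness of the cohomology sequence from Serre for the remaining pieces.
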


\begin{proof}
Let $x_i=g_ix_0\in X(\RR)$ ($i=1,2$). Then $[\bar{g_1}^{-1}g_1]=[\bar{g_2}^{-1}g_2]$ in $\Ho1(\RR,H)$ if and only if $\bar{g_1}^{-1}g_1=\bar{h}^{-1}\bar{g_2}^{-1}g_2h$ for some $h\in H$, i.e., $g=g_2hg_1^{-1}\in G(\RR)$. But elements $g$ of this form are exactly those for which $gx_1=x_2$.
\end{proof}

Now let $G$ be semisimple (or, more generally, reductive) and $\tau$ be a Cartan involution on $G$ compatible with the real structure $\sigma$.

\begin{lemma}[cf.\ {\cite[Thm.\,5.3]{Galois&quot}, \cite[II.5.5]{comp-symm}}]\label{unitary}
Each Galois cohomology class $[z]\in\Ho1(\RR,G)$ is represented by a ``unitary'' 1-cocycle $z\in G^{\tau}$. The equivalence of cocycles of this form is given by twisted conjugation in $G^{\tau}$. In other words,
$$\Ho1(\RR,G)\simeq\Ho1(\langle\sigma\rangle,G^{\tau}),$$
the non-Abelian cohomology group of the cyclic group of order two $\langle\sigma\rangle$ with coefficients in $G^{\tau}$.
\end{lemma}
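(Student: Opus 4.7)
\smallskip\noindent\emph{Proof proposal.} I would split the lemma into two independent claims corresponding to the surjectivity and injectivity of the natural comparison map $\Ho1(\langle\sigma\rangle,G^{\tau})\to\Ho1(\RR,G)$.

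For surjectivity, given $z\in\Z1(\RR,G)$ I want to produce $g\in G$ with $g^{-1}z\sigma(g)\in G^{\tau}$ by a fixed-point argument. Consider the \emph{twisted real structure} $\sigma_z(h):=z\sigma(h)z^{-1}$, which is an antiholomorphic involution of $G$ by the cocycle identity $z\sigma(z)=1$. Since $\sigma\tau=\tau\sigma$, $\sigma$ preserves $G^{\tau}$, so $\sigma_z$ descends to an isometric involution $hG^{\tau}\mapsto z\sigma(h)G^{\tau}$ of the Riemannian symmetric space $M:=G/G^{\tau}$ endowed with the $G$-invariant metric coming from the Killing form restricted to $\mathfrak{p}:=\ii\g^{\tau}$. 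Because $M$ is a Hadamard manifold (complete, simply connected, of nonpositive curvature), Cartan's fixed-point theorem supplies a fixed coset $gG^{\tau}$; the identity $z\sigma(g)G^{\tau}=gG^{\tau}$ then forces $g^{-1}z\sigma(g)\in G^{\tau}$, cohomologous to $z$ via twisted conjugation by~$g^{-1}$.

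For injectivity, let $z_1,z_2\in\Z1(\RR,G)\cap G^{\tau}$ be related by $z_2=gz_1\sigma(g)^{-1}$ with $g\in G$; I would upgrade $g$ to an element of $G^{\tau}$ by the global Cartan decomposition $G=G^{\tau}\cdot\exp(\mathfrak{p})$. Write $g=k_0\exp(Y)$ uniquely with $k_0\in G^{\tau}$ and $Y\in\mathfrak{p}$. Commutativity $\sigma\tau=\tau\sigma$ yields $\sigma(k_0)\in G^{\tau}$ and $\sigma(Y)\in\mathfrak{p}$, so $\sigma(g)^{-1}=\exp(-\sigma(Y))\sigma(k_0)^{-1}$ is itself in Cartan form. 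Applying $\tau$ to the identity $z_2=k_0\exp(Y)\,z_1\exp(-\sigma(Y))\sigma(k_0)^{-1}$, using $\tau(z_i)=z_i$ and $\tau(Y)=-Y$, and comparing with the original expression, one obtains after cancelling $k_0$ and $\sigma(k_0)^{-1}$ and invoking injectivity of $\exp$ on the $\Ad(z_1)$-stable subspace $\mathfrak{p}$ the Lie-algebra constraint $\Ad(z_1)\sigma(Y)=Y$. Substituting this identity back into the formula for $z_2$ causes the $\exp$-factors to telescope, leaving $z_2=k_0z_1\sigma(k_0)^{-1}$ with $k_0\in G^{\tau}$, as required.

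The delicate step is the fixed-point assertion for $\sigma_z$ on $M$, which uses the Hadamard geometry of $M$ as an external geometric input. A purely algebraic alternative is to polar-decompose $z=k\exp(Y)$ directly and exploit uniqueness of the decomposition applied to $\sigma(z)=z^{-1}$ to deduce $\sigma(k)=k^{-1}$ (so $k$ is already a unitary cocycle) together with the compatibility $\sigma(Y)=-\Ad(k)Y$, and then exhibit an explicit conjugator in $\exp(\mathfrak{p})$ carrying $z$ to $k$; however, constructing that conjugator cleanly requires the same bookkeeping as in the second part above. I would favour the geometric route for conceptual clarity and because it parallels the approach referenced in \cite[II.5]{comp-symm}.
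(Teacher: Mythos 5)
Your argument is correct, but the surjectivity half takes a genuinely different route from the paper. For injectivity your computation is essentially the paper's: both Cartan-decompose the conjugating element $g=g_-g_+$ ($g_-\in G^{\tau}$, $g_+\in\exp\ii\g^{\tau}$) and reduce to uniqueness of that decomposition; you implement the reduction by applying $\tau$ and cancelling, which leads to $\Ad(z_1)\sigma(Y)=Y$ and the telescoping you describe (this checks out: from $\exp(Y)z_1\exp(-\sigma(Y))=\exp(-Y)z_1\exp(\sigma(Y))$ one gets $\exp(2Y)=\exp(2\Ad(z_1)\sigma(Y))$ and injectivity of $\exp$ on $\ii\g^{\tau}$ finishes it), whereas the paper regroups the factors and quotes uniqueness directly --- a cosmetic difference. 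For surjectivity, however, the paper does not use any fixed-point geometry: it Cartan-decomposes the cocycle itself, $z=z_-z_+$, derives from $z\sigma(z)=1$ and uniqueness the relations $z_-\sigma(z_-)=1$ and $z_-z_+z_-^{-1}=\sigma(z_+)^{-1}$, and then exhibits the explicit conjugator $g=\sigma(z_+)^{1/2}$ (the square root taken in the one-parameter subgroup of $\exp\ii\g^{\tau}$ through $z_+$), giving $gz\sigma(g)^{-1}=z_-\in G^{\tau}$. So the ``purely algebraic alternative'' you sketch at the end is exactly the paper's proof, and the conjugator you feared would need heavy bookkeeping is just this square root. Your Cartan fixed-point argument on the Hadamard manifold $G/G^{\tau}$ is a standard and perfectly valid alternative (it is the classical route behind $\Ho1(\RR,G)\simeq\Ho1(\langle\sigma\rangle,G^{\tau})$); its cost is the external geometric input (completeness, nonpositive curvature and simple connectedness of $G/G^{\tau}$, plus the fixed-point theorem), while the paper's point here is precisely to give a short self-contained elementary argument filling the gap in Borel--Ji. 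One small imprecision on your side: the involution of $G/G^{\tau}$ you actually use is the map induced by $h\mapsto z\sigma(h)$, not by $\sigma_z(h)=z\sigma(h)z^{-1}$ (right multiplication by $z^{-1}$ does not descend); the map you wrote is well defined and involutive precisely because $z\sigma(z)=1$, so the argument is unaffected, but the phrase ``$\sigma_z$ descends to'' should be adjusted.
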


\begin{remark}
In \cite[II.5.5]{comp-symm}, it is only proved that the natural map $$\Ho1(\langle\sigma\rangle,G^{\tau})\to\Ho1(\RR,G)$$ is surjective with trivial kernel. We fill the gap in that proof below. Bremigan proves a more general result on relative Galois cohomology in \cite[Thm.\,5.3]{Galois&quot}, but the proof is more involved.
\end{remark}

\begin{proof}[Proof of Lemma~\ref{unitary}]
Take any 1-cocycle $z\in G$. We use the Cartan decomposition $G=G^{\tau}\cdot\exp\ii\g^{\tau}$ to write $z=z_-z_+$. Then
$$z\bar{z}=z_-z_+\overline{z}_-\overline{z}_+=1\implies z_-\overline{z}_-\cdot(\overline{z}_-)^{-1}z_+\overline{z}_-=(\overline{z}_+)^{-1}.$$
By the uniqueness of the Cartan decomposition,
$$z_-\overline{z}_-=1 \quad\text{and}\quad (\overline{z}_-)^{-1}z_+\overline{z}_-=z_-z_+z_-^{-1}=(\overline{z}_+)^{-1}.$$
Furthermore, logarithming and exponentiating back yields that
$$z_-z_+^tz_-^{-1}=(\overline{z}_+)^{-t}$$
for any element $z_+^t$ ($t\in\RR$) in the 1-parameter subgroup in $\exp\ii\g^{\tau}$ passing through $z_+$. Taking $g=(\overline{z}_+)^{1/2}$, we see that $gz\bar{g}^{-1}=z_-$.

Now suppose $z\in G^{\tau}$ and consider an equivalent cocycle $z'=gz\bar{g}^{-1}\in G^{\tau}$ (${g\in G}$). We have
\begin{multline*}
z'=g_-g_+z(\overline{g}_+)^{-1}(\overline{g}_-)^{-1}=\\=
g_-z(\overline{g}_-)^{-1}\cdot\overline{g}_-z^{-1}g_+z(\overline{g}_-)^{-1}\cdot
\overline{g}_-(\overline{g}_+)^{-1}(\overline{g}_-)^{-1} \implies \\
\implies z'\cdot\overline{g}_-\overline{g}_+(\overline{g}_-)^{-1}=
g_-z(\overline{g}_-)^{-1}\cdot\overline{g}_-z^{-1}g_+z(\overline{g}_-)^{-1}.
\end{multline*}
Again by the uniqueness of the Cartan decomposition, we get $z'=g_-z(\overline{g}_-)^{-1}$.
\end{proof}

\section{Results}
\label{results}

We now come to a description of the real group orbits in the real locus of a symmetric space. In the following, we prefer to denote the complex conjugation on $G$ by $\sigma$, rather than by a bar, in view of the presence of other involutions (both holomorphic and antiholomorphic).

\subsection{Galois cohomology description of real orbits}
\label{Galois-orb}

Consider the set $T_1\cap H$. Since $\theta$ acts on $T_1$ as an inversion and on $H$ identically modulo $Z(G)$, we have $s^2\in Z(G)$, $\forall s\in T_1\cap H$, whence $T_1\cap H$ is a finite subgroup of $T_1^-$. It contains (is contained in) $\0{T_1}2$ whenever $H\supseteq G^{\theta}$ (resp.\ $H\subseteq G^{\theta}$).

Since $\sigma$ acts on $T_1^-$ as an inversion, each $s\in T_1\cap H$ is represented as $s=t^2=t\sigma(t)^{-1}$ for some $t\in T_1^-$, i.e., it is a coboundary in $G$ and a cocycle in $H$. Hence $T_1\cap H$ maps to $\Ker[\Ho1(\RR,H)\to\Ho1(\RR,G)]$.

Consider the subgroups
\begin{align*}
N_0 &= \{n\in N_H(T_1)\mid n\sigma(n)^{-1}\in T_1\},\\
Z_0 &= \{z\in Z_H(T_1)\mid z\sigma(z)^{-1}\in T_1\}.
\end{align*}
It follows from the proof of Lemma~\ref{Weyl} that $N_0/Z_0=W_0$. Indeed, it suffices to represent the root reflections by elements of $N_0$. But this was actually done in the course of the proof.

Also note that $N_0=N_0^{\tau}\cdot Z_0^{\sigma,-\tau}$. Indeed, considering the Cartan decomposition $n=n_-n_+$ of an element $n\in N_0$ in $N_H(T_1)$ yields
\begin{multline*}
n\sigma(n)^{-1}=n_-n_+\sigma(n_+)^{-1}\sigma(n_-)^{-1}=s\in T_1\cap H\subset T_1^-
\implies \\ \implies n_-\sigma(n_-)^{-1}\cdot\sigma(n_-)n_+\sigma(n_-)^{-1}=s\cdot\sigma(n_-)\sigma(n_+)\sigma(n_-)^{-1}
\implies \\ \implies
n_-\sigma(n_-)^{-1}=s,\quad n_+=\sigma(n_+)\in Z_0.
\end{multline*}

The group $N_0$ acts on $T_1$ by twisted conjugation preserving $T_1\cap H$:
$$n:t\mapsto nt\sigma(n)^{-1}=ntn^{-1}\cdot n\sigma(n)^{-1}.$$
The ``Hermitian'' part $Z_0^{\sigma,-\tau}\subset N_0$ acts trivially.

\begin{remark}
The twisted conjugation action is not reduced to the $W_0$-ac\-tion, as the usual conjugation action of $N_0$. As an example, consider $G=SL_2$, $H=SO_{1,1}$ (cf.~Example~\ref{qforms}). Here
\begin{align*}
T_1&=\left\{
\begin{pmatrix}
u & 0      \\
0 & u^{-1} \\
\end{pmatrix}
\right|\left.
\vphantom{\begin{pmatrix}
u & 0      \\
0 & u^{-1} \\
\end{pmatrix}}
u\in\CC^{\times}
\right\}, \\
N_0&=\left\{
\begin{pmatrix}
\pm1 &   0  \\
  0  & \pm1 \\
\end{pmatrix},
\begin{pmatrix}
   0   & \pm\ii \\
\pm\ii &    0   \\
\end{pmatrix}
\right\}.
\end{align*}
Conjugation by
$$n=\begin{pmatrix}
 0  & \ii \\
\ii &  0  \\
\end{pmatrix}$$
sends $u$ to $u^{-1}$, while twisted conjugation sends $u$ to $-u^{-1}$. Thus $W_0$ has two orbits on $T_1\cap H= \{\pm{E}\}$, while $N_0$ has just one orbit.
\end{remark}

\begin{theorem}\label{Ker}
$\Ker[\Ho1(\RR,H)\to\Ho1(\RR,G)]\simeq(T_1\cap H)/N_0$ (the quotient by the twisted conjugation action) is in bijection with the orbit set for $G(\RR)$ acting on $X(\RR)$.
\end{theorem}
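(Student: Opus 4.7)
The plan is to combine the statement with Theorem~\ref{orbits}, reducing to exhibiting a bijection
\[
(T_1\cap H)/N_0 \;\simeq\; \Ker[\Ho1(\RR,H)\to\Ho1(\RR,G)],
\]
realized by the map $\phi\colon s\mapsto[s]$. Well-definedness and factorization through $N_0$ follow at once from the discussion preceding the theorem: $s\in T_1\cap H$ is a $1$-cocycle in $H$ (since $\sigma$ inverts $T_1^-$) and a coboundary $t\sigma(t)^{-1}$ in $G$ for any $t\in T_1^-$ with $t^2=s$; and for $n\in N_0$ the element $ns\sigma(n)^{-1}=(nsn^{-1})\cdot(n\sigma(n)^{-1})$ stays in $T_1\cap H$ (both factors do, the second by the very definition of $N_0$), and represents the same $H$-cohomology class as $s$.

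For surjectivity, starting from $[z]\in\Ker$, Lemma~\ref{unitary} applied to $H$ lets us assume $z\in H^\tau$. Since $z,1\in G^\tau$ are cohomologous in $G$, Lemma~\ref{unitary} applied to $G$ yields $g\in G^\tau$ with $z=g^{-1}\sigma(g)$. Writing $g=kth$ via Theorem~\ref{polar} ($k\in G^{\sigma,\tau}$, $t\in T_1^-$, $h\in H^\tau$) and exploiting $\sigma(k)=k$, $\sigma(t)=t^{-1}$, $\sigma(h)=\omega(h)$ on $G^\tau$, a direct computation gives
\[
z \;=\; g^{-1}\sigma(g) \;=\; h^{-1}t^{-2}\omega(h) \;=\; h^{-1}s\sigma(h), \qquad s:=t^{-2}\in T_1^-.
\]
Since $z,h,\sigma(h)\in H$, we have $s\in T_1\cap H$; and $z$ is the twisted conjugate of $s$ by $h^{-1}\in H$, so $[z]=\phi(s)$.

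The main obstacle is injectivity. Assume $s_2=h s_1\sigma(h)^{-1}$ with $h\in H$ and $s_1,s_2\in T_1\cap H$; one must produce $n\in N_0$ implementing the same twisted conjugation. From $T_1^-\subset G^\tau$ we obtain $s_i\in H^\tau$, so Lemma~\ref{unitary} applied to $H$ allows us to take $h\in H^\tau$. I plan to analyze $h$ by applying Theorem~\ref{polar} inside $G^\tau$, writing $h=k_0 t_0 h_0$ with $k_0\in G^{\sigma,\tau}$, $t_0\in T_1^-$, $h_0\in H^\tau$; the extra condition $h\in H$ (i.e.\ $\theta(h)=h$, adjusted by finite data when $H\ne G^\theta$) together with uniqueness-modulo-$\0{T_1}2$ of the polar decomposition forces $k_0$ to lie in $H^{\sigma,\tau}\cdot\0{T_1}2$ and $t_0^2\in\0{T_1}2\subseteq T_1\cap H$. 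Next, Theorem~\ref{conj} applied to the maximal $\RR$-split toral subalgebra $h_0\ttt_1 h_0^{-1}\subset\g^{-\theta}$ conjugates it back to $\ttt_1$ by an element of $H^\sigma$, yielding an element of $N_H(T_1)$. Finally, the identification $N_0=N_0^\tau\cdot Z_0^{\sigma,-\tau}$ recorded earlier, together with the observation that $Z_0^{\sigma,-\tau}$ acts trivially on $T_1$ by twisted conjugation, permits assembling the combined effect of $k_0$, $t_0$, and the Theorem~\ref{conj} correction into a single $n\in N_0^\tau\subseteq N_0$ satisfying $s_2=n s_1\sigma(n)^{-1}$. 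The delicate part will be verifying that the resulting $n$ actually satisfies $n\sigma(n)^{-1}\in T_1$ (not merely $n\in N_H(T_1)$), which requires careful bookkeeping of the induced real structures on the intermediate tori arising in the Theorem~\ref{conj} reduction.
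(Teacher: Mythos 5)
The surjectivity half of your argument is correct and matches the paper's: applying Lemma~\ref{unitary} twice and Theorem~\ref{polar} to write $g\in G^\tau$ as a product in $G^{\sigma,\tau}\cdot T_1^-\cdot H^\tau$, then collapsing $z=g^{-1}\sigma(g)$ (or $g\sigma(g)^{-1}$; the two are interchangeable) to a twisted conjugate of $t^{\pm2}\in T_1\cap H$. The reduction to Theorem~\ref{orbits} is also as expected.

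The injectivity plan, however, has a genuine gap, and two of its intermediate steps are actually incorrect. First, the decomposition $G^\tau=G^{\sigma,\tau}\cdot T_1^-\cdot H^\tau$ is far from unique modulo $\0{T_1}2$: the fibers of the multiplication map are generically positive-dimensional (they carry a twisted action of $Z_{G^{\sigma,\tau}\times H^\tau}(T_1)$), and since $h$ already lies in the factor $H^\tau$, writing $h=k_0t_0h_0$ imposes no nontrivial constraint at all — $h=1\cdot1\cdot h$ is always available. So the claim that $k_0\in H^{\sigma,\tau}\cdot\0{T_1}2$ and $t_0^2\in\0{T_1}2$ is not justified. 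Second, applying Theorem~\ref{conj} to $h_0\ttt_1h_0^{-1}$ requires that torus to be $(\sigma,\theta)$-split, which would force $h_0^{-1}\sigma(h_0)$ to centralize $T_1$; this is exactly the condition you cannot assume, and you yourself flag the resulting difficulty of ensuring $n\sigma(n)^{-1}\in T_1$ without resolving it.

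The paper's injectivity argument relies on an idea that is absent from your proposal: from $s=hs'\sigma(h)^{-1}$, one defines a \emph{twisted} real structure $\tilde\sigma(g)=s\sigma(g)s^{-1}$ (an involution because $\sigma(s)=s^{-1}$, still commuting with $\theta,\tau$). The cocycle relation then makes $hT_1h^{-1}$ a maximal $(\tilde\sigma,\theta)$-split torus, and since $\tilde\sigma=\sigma$ on $Z_G(T_1)$, $T_1$ itself is also maximal $(\tilde\sigma,\theta)$-split. Theorem~\ref{conj} in the $\tilde\sigma$-world then produces $g\in H^{\tilde\sigma}$ with $ghT_1h^{-1}g^{-1}=T_1$, and setting $n=gh$ one checks directly that $ns'\sigma(n)^{-1}=s$; the membership $n\sigma(n)^{-1}\in T_1\cap H$ then falls out automatically from $ns'\sigma(n)^{-1}=(ns'n^{-1})\cdot(n\sigma(n)^{-1})$ with both the product and the first factor in $T_1\cap H$. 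Without this twisting trick there is no reason for the correction element from Theorem~\ref{conj} to produce an $n\in N_0$ rather than merely $n\in N_H(T_1)$, which is precisely the obstacle you identified but did not overcome.
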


\begin{remark}
In \cite[II.6.5]{comp-symm}, it is claimed that the set of $G(\RR)$-orbits in $X(\RR)$ is identified with the quotient of $\0{T_1}2$ (which coincides with $T_1\cap H$ for $H=G^{\theta}$) modulo a group smaller than~$N_0$, that is $N_0^{\sigma,\tau}=N_{H^{\sigma,\tau}}(T_1)$. For this latter group, the twisted conjugation action coincides with the usual one and is reduced to the action of the very little Weyl group~$W_{00}$. But this is false in general.

Indeed, in Example~\ref{qforms}, $X$ is the space of quadratic forms in $n$ variables with discriminant~$(-1)^q$. The base point $x_0\in X$ is the quadratic form with the matrix $I_{p,q}$. The orbits of $G(\RR)$ in $X(\RR)$ consist of quadratic forms of given signature $(p',q')$ with $q'\equiv q\pmod2$, the number of  orbits is $\sim n/2$.

However, $T_1=T$ is the torus of diagonal matrices with $\det=1$ and $\0{T_1}2=T_1\cap H$ is the set of matrices of the form $\diag(\pm1,\dots,\pm1)$ with even number of minuses. The group $W_{00}\simeq S_p\times S_q$ acts on $\0{T_1}2$ by permutations of the first $p$ and last $q$ diagonal entries, the number of orbits being $\sim pq/2$.
\end{remark}

\begin{proof}[Proof of Theorem~\ref{Ker}]
Take any $[z]\in\Ker[\Ho1(\RR,H)\to\Ho1(\RR,G)]$. By Lemma~\ref{unitary}, we may assume that $z=g\sigma(g)^{-1}\in H^{\tau}$ with $g\in G^{\tau}$. By Theorem~\ref{polar}, $g=htk$ with $h\in H^{\tau}$, $t\in T_1^-$, $k\in G^{\sigma,\tau}$, whence
$$z=htk\cdot k^{-1}t\sigma(h)^{-1}=ht^2\sigma(h)^{-1}\sim t^2=s\text{ in }\Z1(\RR,H)$$
and $s\in T_1^-\cap H$. Thus any cohomology class in the kernel is represented by a 1-cocycle in~$T_1\cap H$.

Now suppose we are given two equivalent cocycles $s,s'\in T_1\cap H$, $s=hs'\sigma(h)^{-1}$, $h\in H$. Define a new real structure on $G$ by
$$\tilde\sigma(g)=s\sigma(g)s^{-1}$$
(which is indeed an involution, because $\sigma(s)=s^{-1}$). It still commutes with all other involutions.

Observe that both $T_1$ and $hT_1h^{-1}$ are maximal $(\tilde\sigma,\theta)$-split tori in $G$. Indeed, for any $t\in T_1$ we have
\begin{align*}
\tilde\sigma(t)        &= \sigma(t), \\
      \theta(t)        &= t^{-1}, \\
\tilde\sigma(hth^{-1}) &= s\sigma(h)\sigma(t)\sigma(h)^{-1}s^{-1}=hs'\sigma(t)(s')^{-1}h^{-1}=h\sigma(t)h^{-1}, \\
      \theta(hth^{-1}) &= \theta(h)\theta(t)\theta(h)^{-1} = ht^{-1}h^{-1},
\end{align*}
i.e., both tori are $(\tilde\sigma,\theta)$-split. By Theorem~\ref{conj}, $T_1$ is a maximal $(\sigma,\theta)$-split torus in $G$. But $\tilde\sigma=\sigma$ on $Z_G(T_1)$, because $s\in T_1$. Hence $T_1$ cannot be extended to a bigger $(\tilde\sigma,\theta)$-split torus. The same is true for $hT_1h^{-1}$ by dimension reasons.

By Theorem~\ref{conj}, there exists $g\in H^{\tilde\sigma}$ such that $ghT_1h^{-1}g^{-1}=T_1$. Then $n=gh\in N_H(T_1)$ and
$$ns'\sigma(n)^{-1}=ghs'\sigma(h)^{-1}\sigma(g)^{-1}=gs\sigma(g)^{-1}=g\tilde\sigma(g)^{-1}s=s.$$
Furthermore, $ns'\sigma(n)^{-1}=ns'n^{-1}\cdot n\sigma(n)^{-1}$ and, since the last product and its first factor both belong to $T_1\cap H$, we have $n\sigma(n)^{-1}\in T_1\cap H$, whence $n\in N_0$.
\end{proof}

\subsection{Geometric description of real orbits}
\label{geom-orb}

Put $Z=T_1x_0=Tx_0\simeq A=T/(T\cap H)=T_1/(T_1\cap H)$.

As noted above, each 1-cocycle $s\in T_1\cap H$ can be represented as $s=t^2=\sigma(t)^{-1}t$ for some $t\in T_1^-$. By \eqref{connect}, it corresponds to the $G(\RR)$-orbit of $x=tx_0$. Hence all $G(\RR)$-orbits intersect~$Z(\RR)$.

The set $Z(\RR)$ is identified with $A(\RR)=A^+\times\0A2$ via the orbit map at $x_0$, and we may consider a subset $\0Z2\subset Z(\RR)$ corresponding to $\0A2$. It is a set of orbit representatives (cross-section) for the action of $T_1^+$ or $T^+=T_0^+\times T_1^+$ on $Z(\RR)$.

The remaining part $\0T2$ of $T(\RR)$ preserves $\0Z2$. Clearly, the $T(\RR)$-or\-bits on $Z(\RR)$ intersect $\0Z2$ in $\0T2$-orbits.

There is a natural bijection between $\0Z2/\0{T_1}2$ and $(T_1\cap H)/(T_1\cap H)^2$ given by sending $x=tx_0$ ($t\in T_1^-$, $t^2\in T_1\cap H$) to $s=t^2$. Note that $\0{T_1}2$ acts on $Z$ trivially if $H\supseteq G^{\theta}$, and $(T_1\cap H)^2=\{1\}$ if $H\subseteq G^{\theta}$.

\begin{lemma}\label{Z0-action}
If $G$ is $\RR$-split, then there are natural bijections between the orbit sets for the following actions: $T(\RR)\curvearrowright Z(\RR)$, $\0T2\curvearrowright\0Z2$, $Z_0\curvearrowright T_1\cap H$ (by twisted conjugation).
\end{lemma}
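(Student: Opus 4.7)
The plan is to route all three orbit sets through the common quotient $(T_1\cap H)/(T_1\cap H)^2$.

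The bijection between $T(\RR)$-orbits on $Z(\RR)$ and $\0T2$-orbits on $\0Z2$ follows from the cross-section property of $\0Z2$ for the $T^+$-action together with the factorization $T(\RR)=T^+\cdot\0T2$ noted just above the lemma. To identify these with $(T_1\cap H)/(T_1\cap H)^2$, I would use that $T_0\subset H$ acts trivially on $Z$, so the $\0T2$-action reduces to $\0{T_1}2$, and then apply the natural isomorphism $\0Z2/\0{T_1}2\simeq(T_1\cap H)/(T_1\cap H)^2$ already recorded before the lemma (sending $tx_0\mapsto t^2$).

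For the third orbit set I would first observe that, since $Z_0\subset Z_H(T_1)$ commutes with $T_1$, the twisted conjugation on $T_1\cap H$ degenerates to translation by the subset $B=\{z\sigma(z)^{-1}\mid z\in Z_0\}$ of $T_1\cap H$; centrality of $T_1$ in $Z_G(T_1)$ also makes $z\mapsto z\sigma(z)^{-1}$ a homomorphism on $Z_0$, so $B$ really is a subgroup. The inclusion $B\supseteq(T_1\cap H)^2$ is immediate from $\sigma(t)=t^{-1}$ on $T_1^-$ (as $T$ is $\RR$-split), which gives $t\sigma(t)^{-1}=t^2$ for $t\in T_1\cap H$.

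The main obstacle is the reverse inclusion $B\subseteq(T_1\cap H)^2$, which is where the $\RR$-split hypothesis is essential. Here I would invoke the decomposition $Z_G(T_1)=Z_H(T_1)^\circ\cdot T_1$; this holds in the $\RR$-split case because the reduction used in Corollary~\ref{split} forces every root space of the Levi $Z_G(T_1)$ to lie in $\h$, giving $\z_\g(\ttt_1)=\z_\h(\ttt_1)+\ttt_1$. Intersecting with $H$ yields $Z_H(T_1)=Z_H(T_1)^\circ\cdot(T_1\cap H)$, and writing $z=z^\circ t$ with $z^\circ\in Z_H(T_1)^\circ$ and $t\in T_1\cap H$, centrality of $t^2\in T_1$ in $Z_G(T_1)$ yields
\[
z\sigma(z)^{-1}=z^\circ\sigma(z^\circ)^{-1}\cdot t^2.
\]
So the condition $z\in Z_0$ forces $z^\circ\sigma(z^\circ)^{-1}\in T_1\cap Z_H(T_1)^\circ$. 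I would close the argument by showing that this intersection is trivial: the same $\RR$-split reduction places the derived subgroup of $Z_G(T_1)$ inside $H$, so its maximal torus lies in $T_0$, making $T_0$ a maximal torus of $Z_H(T_1)^\circ$; therefore any $T_1$-element of $Z_H(T_1)^\circ$ is central, hence lies in $T_0\cap T_1=\{1\}$. This gives $z\sigma(z)^{-1}=t^2\in(T_1\cap H)^2$, as required.
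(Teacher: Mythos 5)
There is a genuine gap, and it is concentrated in one assertion: that $T_0\cap T_1=\{1\}$ (equivalently, that $T_1\cap Z_H(T_1)^{\circ}$ is trivial), from which you conclude $B=\{z\sigma(z)^{-1}\mid z\in Z_0\}=(T_1\cap H)^2$. This is false in general, even for $G$ $\RR$-split: $T_0\cap T_1$ is an elementary $2$-group which may well be nontrivial. Your own computation, carried out correctly, only gives $z^{\circ}\sigma(z^{\circ})^{-1}\in T_1\cap Z_H(T_1)^{\circ}\subseteq T_0\cap T_1$, hence $B\subseteq(T_0\cap T_1)\cdot(T_1\cap H)^2$; and in fact equality holds, which is exactly what the paper records in the remark after its proof (``the twisted conjugation action of $Z_0$ on $T_1\cap H$ is just translation by $(T_0\cap T_1)\cdot(T_1\cap H)^2$''). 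A concrete counterexample: $G=SL_2\times SL_2$ with the split real structure, $\theta$ the swap involution, $H=G^{\theta}$ the diagonal $SL_2$, so $X\simeq SL_2$ with $G$ acting by left--right translations. Here $T_1=\{(t,t^{-1})\}$, $T_0=\{(t,t)\}$, $T_1\cap H=\{\pm(I,I)\}=T_0\cap T_1$, and $(T_1\cap H)^2=\{1\}$; the element $z=(\diag(\ii,-\ii),\diag(\ii,-\ii))\in T_0\subseteq Z_H(T_1)$ lies in $Z_0$ and has $z\sigma(z)^{-1}=(-I,-I)$, so $B=T_1\cap H\neq(T_1\cap H)^2$. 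Your version would predict two $Z_0$-orbits, whereas $G(\RR)=SL_2(\RR)\times SL_2(\RR)$ acts transitively on $X(\RR)=SL_2(\RR)$ and $T(\RR)$ acts on $Z(\RR)$ with a single orbit.

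The same oversight breaks the other half of your routing: the $\0T2$-action on $\0Z2$ does \emph{not} reduce to the $\0{T_1}2$-action, because $\0T2$ need not equal $\0{T_0}2\cdot\0{T_1}2$. An order-two element of $T$ can be a product of order-four elements of $T_0^-$ and $T_1^-$ (in the example above, $(I,-I)$), and such an element acts on $\0Z2$ through an order-four element of $T_1^-$ whose square is a nontrivial element of $T_0\cap T_1\subseteq T_1\cap H$ --- so modulo the identification $\0Z2/\0{T_1}2\simeq(T_1\cap H)/(T_1\cap H)^2$ it again produces translation by $T_0\cap T_1$. The correct statement, and the paper's actual argument, is that \emph{both} the second and the third orbit sets are quotients of $(T_1\cap H)/(T_1\cap H)^2$ by translation by the image of $T_0\cap T_1$; the paper proves this by reducing $z\in Z_0$ to $Z_0^{\tau}$, using the Cartan decomposition of $Z_0^{\tau}$ together with the centralizer decomposition \eqref{cent} and conjugacy of maximal Abelian subalgebras of $\z_{\h}(\ttt_1)^{\sigma,-\tau}$, which lands $z\sigma(z)^{-1}$ in $(T_0\cap T_1)\cdot(T_1\cap H)^2$ and matches this with $t_0^2$ for $t_0\in T^-\cap H$ on the geometric side. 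Your decomposition $Z_G(T_1)=Z_H(T_1)^{\circ}\cdot T_1$ in the $\RR$-split case is a reasonable group-level substitute for that step, but the proof can only be repaired by keeping track of the $T_0\cap T_1$-translations on both sides rather than by claiming they vanish.
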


\begin{proof}
It only remains to consider the third orbit set.

Suppose that $x_1=t_1x_0$, $x_2=t_2x_0\in\0Z2$ ($t_1,t_2\in T_1^-$) correspond to 1-cocycles $s_1=t_1^2$, $s_2=t_2^2\in T_1\cap H$, and $x_2=tx_1$ for some $t\in\0T2$. Then $t_2=tt_1t_0$ for some $t_0\in T^-\cap H$, whence $s_2=s_1t_0^2=t_0s_1\sigma(t_0)^{-1}$ is obtained from $s_1$ by twisted conjugation with $t_0\in Z_0$.

Conversely, suppose that $s_2=zs_1\sigma(z)^{-1}=s_1z\sigma(z)^{-1}$ for some $z\in Z_0$; we may even assume that $z\in Z_0^{\tau}$. Decomposing $z=z_+z_-$ with respect to the Cartan decomposition $G^{\tau}=\exp\ii\g^{\sigma,-\tau}\cdot G^{\sigma,\tau}$, we get
$$z\sigma(z)^{-1}=z_+z_-\sigma(z_-)^{-1}\sigma(z_+)^{-1}=z_+^2.$$
Moreover, by \eqref{cent} and since $Z_{G^{\sigma,\tau}}(T_1)^{\circ}\subseteq H^{\sigma,\tau}$ (because $G$ is $\RR$-split), we may assume that $z_-\in Z_H(T_1)^{\sigma,\tau}$, $z_+\in\exp\ii\z_{\h}(\ttt_1)^{\sigma,-\tau}\cdot(T_1\cap H)$. Then
$$z_+=kt_0k^{-1}\quad\text{for some $k\in Z_{H}(T_1)$, $t_0\in T_0^-\cdot(T_1\cap H)=T^-\cap H$}$$
(since $\ttt_0^+$ is a maximal Abelian subalgebra in $\z_{\h}(\ttt_1)^{\sigma,-\tau}$ and all such subalgebras are conjugate). But, on the other side,
$$z_+^2=s_2s_1^{-1}\in T_1\implies z_+^2=k^{-1}z_+^2k=t_0^2\in(T_0\cap T_1)\cdot(T_1\cap H)^2.$$
Then
\begin{equation*}
s_2=s_1t_0^2\implies t_2=tt_1t_0\implies x_2=tx_1\quad\text{for some }t\in\0T2.
\qedhere
\end{equation*}
\end{proof}

\begin{remark}
In the course of the proof, we have actually shown that the twisted conjugation action of $Z_0$ on $T_1\cap H$ is just translation by $(T_0\cap T_1)\cdot(T_1\cap H)^2\subseteq T_1\cap H$.
\end{remark}

Now consider the action of $N_0$ on $T_1\cap H$ by twisted conjugation:
$$n:s\mapsto ns\sigma(n)^{-1}=nsn^{-1}\cdot n\sigma(n)^{-1}=\1sn\cdot c(n),$$
where $\1{}n$ denotes the (usual) conjugation with $n$ and the formula $c(n)=n\sigma(n)^{-1}$ defines a map $c:N_0\to T_1\cap H$ satisfying the cocycle property:
$$c(mn)=c(m)\cdot\1{c(n)}m.$$

Let $a(n)\in T_1^-$ be such that $a(n)^2=c(n)$. Note that $a(n)$ is well defined only up to multiplication by $\0{T_1}2$. The map $a:N_0\to T_1/\0{T_1}2$ is a cocycle, too. The induced action of $N_0$ on $(T_1\cap H)/(T_1\cap H)^2$ corresponds to an action of $N_0$ on $\0Z2/\0{T_1}2$ given by
\begin{equation}\label{N0-action}
n:x=tx_0\mapsto a(n)\,\1tn\,x_0=a(n)nx=a(n)^{-1}nx.
\end{equation}
Note that $g=a(n)^{-1}n\in G(\RR)$ since
$$
\sigma(a(n)^{-1}n)=a(n)\sigma(n)=a(n)^{-1}c(n)\sigma(n)=a(n)^{-1}n.
$$

In view of Theorems \ref{orbits} and \ref{Ker}, we have proved:
\begin{theorem}\label{slice-orb}
The $G(\RR)$-orbits on $X(\RR)$ are in a bijective correspondence with the $N_0$-orbits on $\0Z2/\0{T_1}2$ for the action~\eqref{N0-action}. In fact, each $G(\RR)$-orbit intersects $\0Z2$ in the inverse image of an $N_0$-orbit under the quotient map $\0Z2\to\0Z2/\0{T_1}2$.
\end{theorem}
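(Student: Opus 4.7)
The plan is to translate the Galois-cohomological bijection of Theorems~\ref{orbits} and~\ref{Ker} into geometric terms. Combining those two theorems with the argument recorded in Section~\ref{Galois-orb} gives
$$
\{G(\RR)\text{-orbits on }X(\RR)\}\;\longleftrightarrow\;(T_1\cap H)/N_0,
$$
where the composite map sends the orbit of $x=tx_0\in\0Z2$ (with $t\in T_1^-$ and $s:=t^2\in T_1\cap H$) to the class of $s$ via the connecting map~\eqref{connect}. In particular every $G(\RR)$-orbit on $X(\RR)$ already meets $\0Z2$.

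Next I would invoke the bijection
$$
\0Z2/\0{T_1}2\;\longleftrightarrow\;(T_1\cap H)/(T_1\cap H)^2,\qquad tx_0\mapsto t^2,
$$
recorded before Lemma~\ref{Z0-action}, and verify that the geometric action~\eqref{N0-action} of $N_0$ corresponds under this bijection to the action of $N_0$ on $(T_1\cap H)/(T_1\cap H)^2$ induced from twisted conjugation on $T_1\cap H$. Concretely, since $a(n)\in T_1^-$ and $\1tn\in T_1$ commute,
$$
(a(n)\cdot\1tn)^2=a(n)^2\cdot\1{(t^2)}n=c(n)\cdot\1sn=n\sigma(n)^{-1}\cdot nsn^{-1}=ns\sigma(n)^{-1},
$$
the last equality holding because $\sigma(n)^{-1}n=n^{-1}c(n)n\in T_1$ commutes with $s\in T_1$. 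To deduce that the two $N_0$-orbit sets agree, I would then observe that $T_1\cap H\subset Z_0\subset N_0$, and for $h\in T_1\cap H\subset T_1^-$ twisted conjugation sends $s\in T_1\cap H$ to $hs\sigma(h)^{-1}=h^2 s$; hence the $N_0$-twisted-conjugation orbit of $s$ already contains $(T_1\cap H)^2\cdot s$, and the natural projection $(T_1\cap H)/N_0\to((T_1\cap H)/(T_1\cap H)^2)/N_0$ is a bijection. This yields the first assertion.

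For the refinement it suffices to remark that $\0{T_1}2\subset T_1(\RR)\subset G(\RR)$ preserves $\0Z2$, so the intersection of any $G(\RR)$-orbit with $\0Z2$ is a union of $\0{T_1}2$-orbits, hence the preimage under $\0Z2\to\0Z2/\0{T_1}2$ of its own image; the correspondence above identifies that image with a single $N_0$-orbit. The conceptually most delicate step will be the compatibility of actions displayed above: one has to keep the two-valued cocycle $a(n)$ in hand modulo $\0{T_1}2$, verify that $a(n)^{-1}n\in G(\RR)$ (which follows from $\sigma(a(n))=a(n)^{-1}$ together with $a(n)^2=c(n)=n\sigma(n)^{-1}$), and track the commutation $\sigma(n)^{-1}n\in T_1$ used in the final equality.
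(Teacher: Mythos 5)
Your proposal is correct and follows essentially the same route as the paper, which deduces Theorem~\ref{slice-orb} directly from Theorems~\ref{orbits} and~\ref{Ker} together with the observations of Section~\ref{geom-orb} (the bijection $\0Z2/\0{T_1}2\leftrightarrow(T_1\cap H)/(T_1\cap H)^2$, the compatibility of the action~\eqref{N0-action} with twisted conjugation via the cocycle $a$, and $a(n)^{-1}n\in G(\RR)$). Your explicit verification that twisted conjugation by $T_1\cap H\subset Z_0$ is translation by squares, so that $(T_1\cap H)/N_0\to\bigl((T_1\cap H)/(T_1\cap H)^2\bigr)/N_0$ is bijective, just makes precise a step the paper leaves implicit.
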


\begin{remark}
If $H\supseteq G^{\theta}$ (e.g., if $G$ is simply connected), then $a$ yields a well-defined cocycle map $N_0\to\0A2$. Formula~\eqref{N0-action} defines an action of $N_0$ on $\0Z2\simeq\0A2$; it is the usual conjugation action on $\0A2$, which factors through the $W_0$-action, composed with the shift by the cocycle~$a(\cdot)^{-1}$. The $G(\RR)$-orbits on $X(\RR)$ intersect $\0Z2$ in the $N_0$-orbits.
\end{remark}

\begin{example}
Let us describe the $G(\RR)$-orbits on $X(\RR)$ in Example~\ref{qforms}.

The group $Z_0=\0T2$ consists of matrices of the form $\diag(\pm1,\dots,\pm1)$ with even number of minuses. The group $N_0$ is generated by $Z_0$ and the elements representing root reflections, which act on the standard orthonormal basis $e_1,\dots,e_n$ of $\CC^n$ by fixing all $e_k$ with $k\ne i,j$ ($i<j$) and on $e_i,e_j$ by the matrix
\begin{align*}
\begin{pmatrix}
0 & -1 \\
1 &  0 \\
\end{pmatrix}
&\quad\text{if $i,j\le p$ or $i,j>p$,}\\
\intertext{and by}
\begin{pmatrix}
 0  & \ii \\
\ii &  0  \\
\end{pmatrix}
&\quad\text{if $i\le p<j$.}
\end{align*}
The action of $Z_0$ on $T_1=T$ is trivial, and the root reflections act by transposing the $i$-th and $j$-th diagonal entries, and changing their signs if $i\le p<j$. Therefore $N_0$ acts on $T_1\cap H=\0T2$ by permutations of the first $p$ and last $q$ diagonal entries and by replacing a pair of equal entries at the $i$-th and $j$-th places with the opposite values for any pair of indices $i\le p<j$. It is clear that the $N_0$-action takes each point of $T_1\cap H$ to a unique canonical form
\begin{align*}
s_k&=\diag(\underbrace{1,\dots,1}_{p-2k},\underbrace{-1,\dots,-1}_{2k},\underbrace{1,\dots,1}_q)\\
\intertext{or}
s'_k&=\diag(\underbrace{1,\dots,1}_p,\underbrace{-1,\dots,-1}_{2k},\underbrace{1,\dots,1}_{q-2k}).
\end{align*}

On the geometric side, the base point of $X$ is the quadratic form
$$x_0=y_1^2+\dots+y_p^2-y_{p+1}^2-\dots-y_{p+q}^2,$$
where $y_1,\dots,y_n$ are the coordinates in the basis $e_1,\dots,e_n$ ($n=p+q$). The 1-cocycles $s_k=t_k^2$, $s'_k=(t_k')^2$ correspond to the quadratic forms
\begin{align*}
 t_kx_0&=y_1^2+\dots+y_{p-2k}-y_{p-2k+1}^2-\dots-y_{p+q}^2,\\
t'_kx_0&=y_1^2+\dots+y_{p+2k}-y_{p+2k+1}^2-\dots-y_{p+q}^2
\end{align*}
of signatures $(p-2k,q+2k)$ and $(p+2k,q-2k)$, respectively, where
\begin{align*}
t_k&=\diag(\underbrace{\pm1,\dots,\pm1}_{p-2k},\underbrace{\pm\ii,\dots,\pm\ii}_{2k},\underbrace{\pm1,\dots,\pm1}_q),\\
t'_k&=\diag(\underbrace{\pm1,\dots,\pm1}_p,\underbrace{\pm\ii,\dots,\pm\ii}_{2k},\underbrace{\pm1,\dots,\pm1}_{q-2k}).
\end{align*}
Clearly, these quadratic forms represent the $G(\RR)$-orbits on $X(\RR)$.
\end{example}

If $G$ is $\RR$-split, then $Z\subset X$ is the Brion--Luna--Vust slice \cite{BLV}. By Lemma~\ref{Z0-action} and Theorem~\ref{slice-orb}, the $G(\RR)$-orbits on $X(\RR)$ are in a bijective correspondence with the orbits for the $W_0$-action on $\0Z2/\0T2$ which is defined by composing the usual $W_0$-action on $\0A2$ with the shift by the cocycle $W_0\to T/\0T2(T\cap H)$ given by $a(\cdot)^{-1}$.

This observation puts our result into a broader context of spherical homogeneous spaces, which are defined by property of having an open orbit of a Borel subgroup of $G$, see, e.g., \cite[Chap.\,5]{hom&emb}. The concepts of Brion--Luna--Vust slice and little Weyl group make sense in this setting. We shall develop the classification of real orbits on real loci of spherical homogeneous spaces elsewhere.

\end{document}